\renewcommand{\PrintDOI}[1]{\href{http://dx.doi.org/\detokenize{#1}}{doi: \detokenize{#1}}%
	\IfEmptyBibField{pages}{, (to appear in print)}{}}
\theoremstyle{definition}
\newtheorem{theorem}{Theorem}[section]
\newtheorem{corollary}[theorem]{Corollary}
\theoremstyle{definition}
\newtheorem{definition}[theorem]{Definition}
\newtheorem{example}[theorem]{Example}
\theoremstyle{remark}
\newtheorem{remark}[theorem]{Remark}
\numberwithin{equation}{section}
\numberwithin{equation}{section}
\title{On Rational Knots and Links in the Solid Torus}
\author{Khaled Bataineh}
\address{Jordan University of Science and Technology, Irbid, Jordan }
\email{khaledb@just.edu.jo}
\author{Mohamed Elhamdadi}
\address{University of South Florida, Tampa, USA }
\email{emohamed@mail.usf.edu}
\author{Mustafa Hajij}
\address{University of South Florida, Tampa, USA}
\email{mhajij@usf.edu}
\date{}
\subjclass[2000]{Primary 57M27; Secondary 57M25, 11A55 }
\keywords{Rational tangles, Knots and Links, Continued Fractions}
\dedicatory{}
\DeclareMathOperator{\Ima}{Im}
\begin{document}

	\maketitle 
		
\begin{abstract}
We introduce the notion of rational links in the solid torus. We show that rational links in the solid torus are fully characterized by rational tangles, and hence by the continued fraction of the rational tangle. Furthermore, we generalize this by giving an infinite family of ambient isotopy invariants of colored diagrams in the Kauffman bracket skein module of an oriented surface. 
\end{abstract}

\tableofcontents


\bigskip

\section{Introduction}
 A \emph{$2$-tangle} is a curve $\Gamma$ embedded in a closed $3$-ball decomposed into a union of finite number of circles and two arcs such that $\Gamma$ meets the $3$-ball orthogonally in exactly $4$ points.   
A \emph{diagram} of a $2$-tangle can be thought of as a region in a knot or link diagram surrounded
by a rectangle such that the knot or link diagram crosses the rectangle in
four fixed points. These four points are usually thought of as the four corners NW, NE, SW, SE. See Figure \ref{two_tangles} for an example of a 2-tangle.

\begin{figure}[h]
  \centering
   {\includegraphics[scale=0.15]{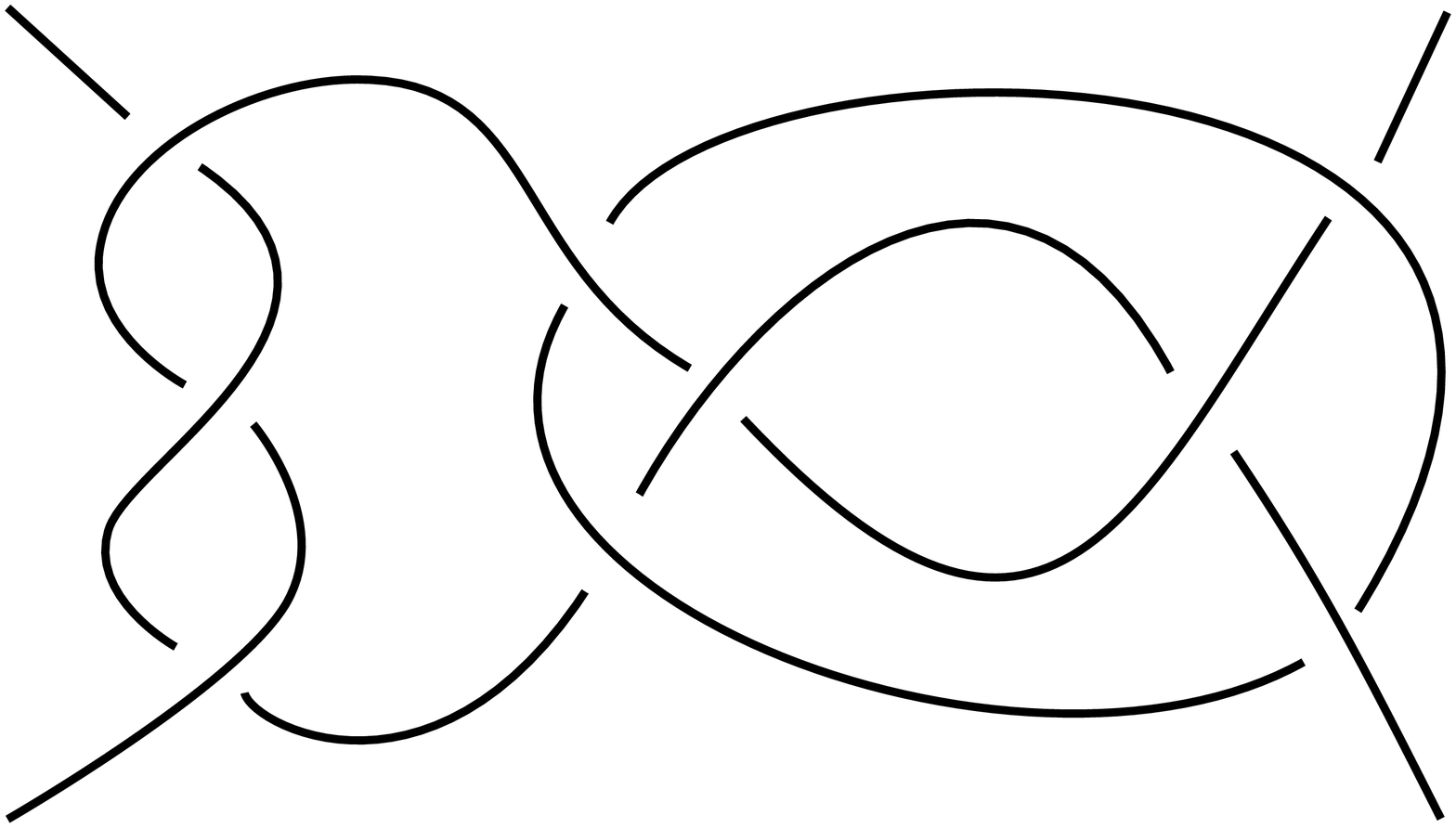}
    \caption{An example of a 2-tangle.}
  \label{two_tangles}}
\end{figure}

A\emph{\ rational tangle} is a special 2-tangle that results from a finite
sequence of twists of consecutive pairs of the four endpoints of two unknotted arcs (See Figure \ref{RT}). Not all 2-tangles are rational. Figure \ref{two_tangles} is an example of a 2-tangle that is not rational. For the formal definition of rational tangles see Section \ref{rational tangles section}.

\begin{figure}[h]
  \centering
   {\includegraphics[scale=0.15]{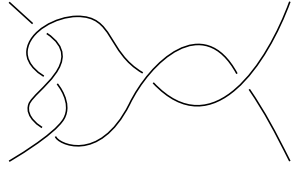}
    \caption{An example of a rational tangle.}
  \label{RT}}
\end{figure}

Rational links are obtained by numerator closures of rational tangles; that is by connecting the two upper endpoints and the two lower endpoints of a given rational
tangle by two unknotted and unlinked arcs. Rational knots and links were considered early in \cite{BS} and \cite{Re}.
The notion of a rational knot was introduced by J. Conway in his work on
classifying knots and links \cite{Co}. Rational tangles and rational knots have been studied extensively in the literature \cite{7mar,BM,BZ,MR1436484,Ka,KL1,Ro}. Furthermore, they have found applications in the study of DNA recombination \cite{KL3,7mar2}.

In this article we define rational knots and links in the solid torus using rational tangles. We show that, with the closure of arcs that we define, two rational links in the solid torus are isotopy equivalent if and only if the corresponding rational tangles are isotopy equivalent. Then we use this result together with the theorem that classifies rational tangles to conclude that the fraction of rational tangles classifies rational knots and links in the solid torus. Finally, we generalize the result by giving an infinite family of ambient isotopy invariants of colored links in the Kauffman bracket skein module of an oriented surface.

{\bf Plan of the Paper:} This paper is organized as follows. In Section \ref{section2} we give the basic concepts and terminology. In Section \ref{rational tangles section} we define rational tangles and their fractions, together with the theorem of classification of rational tangles, and some other basic results. In Section \ref{section3} we introduce the notion of rational links in the solid torus and prove that rational tangles are fully characterized by rational links in the solid torus. In Section \ref{Generalizations} the results are generalized to obtain an infinite family of ambient isotopy invariants of colored diagrams in an oriented surface.

\section{Basic Concepts and Terminology}
\label{section2}

Let $M$ be an oriented $3$-manifold and let $I$ be a closed interval. A framed link in $M$ is  a disjoint union of oriented annuli into $M$. If the manifold $M$ has a boundary $\partial M$ then a closed interval in $\partial M$ is called a framed point. A \text{band} in $M$ is an oriented surface homeomorphic to $I\times I$ embedded in $M$ that  meets $\partial M$ orthogonally at two framed points. 
\begin{definition}\cite{Przytycki, RT} 
	Let $M$ be an oriented $3$-manifold and $\mathcal{R}$ be a commutative ring with an identity and an invertible element $A$. Let $\mathcal{L}_M$ be the set of equivalence classes of framed links in $M$ (including the empty link) up to isotopy. If $M$ has a boundary and an even number of marked framed points on $\partial M$ then the set $\mathcal{L}_M$ also includes the isotopy classes of bands that meet the marked points. Let $\mathcal{R}\mathcal{L}_M$ be the free $ \mathcal{R}$-module generated by the set $\mathcal{L}_M$. The \textit{Kauffman Bracket Skein Module} of $M$ and $\mathcal{R}$, denoted by $\mathcal{S}(M,\mathcal{R})$ is the quotient module
	$\mathcal{S}(M,\mathcal{R})=\mathcal{R}\mathcal{L}_M/K(M)$,
	where $K(M)$ is the smallest submodule of $ \mathcal{R}\mathcal{L}_M$ that is generated by all expressions of the form:
	\begin{eqnarray*}(1)\hspace{3 mm}
		\begin{minipage}[h]{0.06\linewidth}
			\vspace{0pt}
			\scalebox{0.04}{\includegraphics{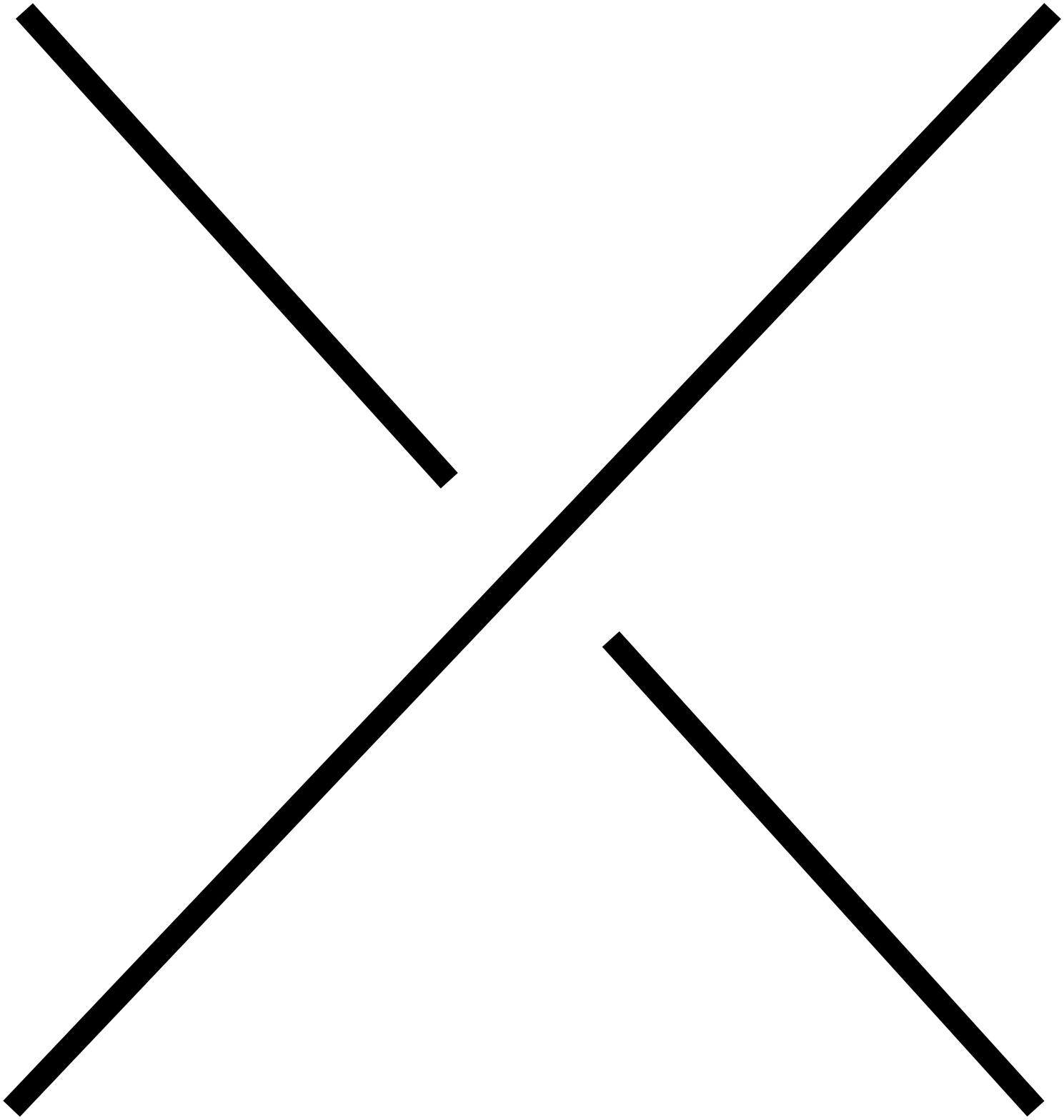}}
		\end{minipage}
		-
		A 
		\begin{minipage}[h]{0.06\linewidth}
			\vspace{0pt}
			\scalebox{0.04}{\includegraphics{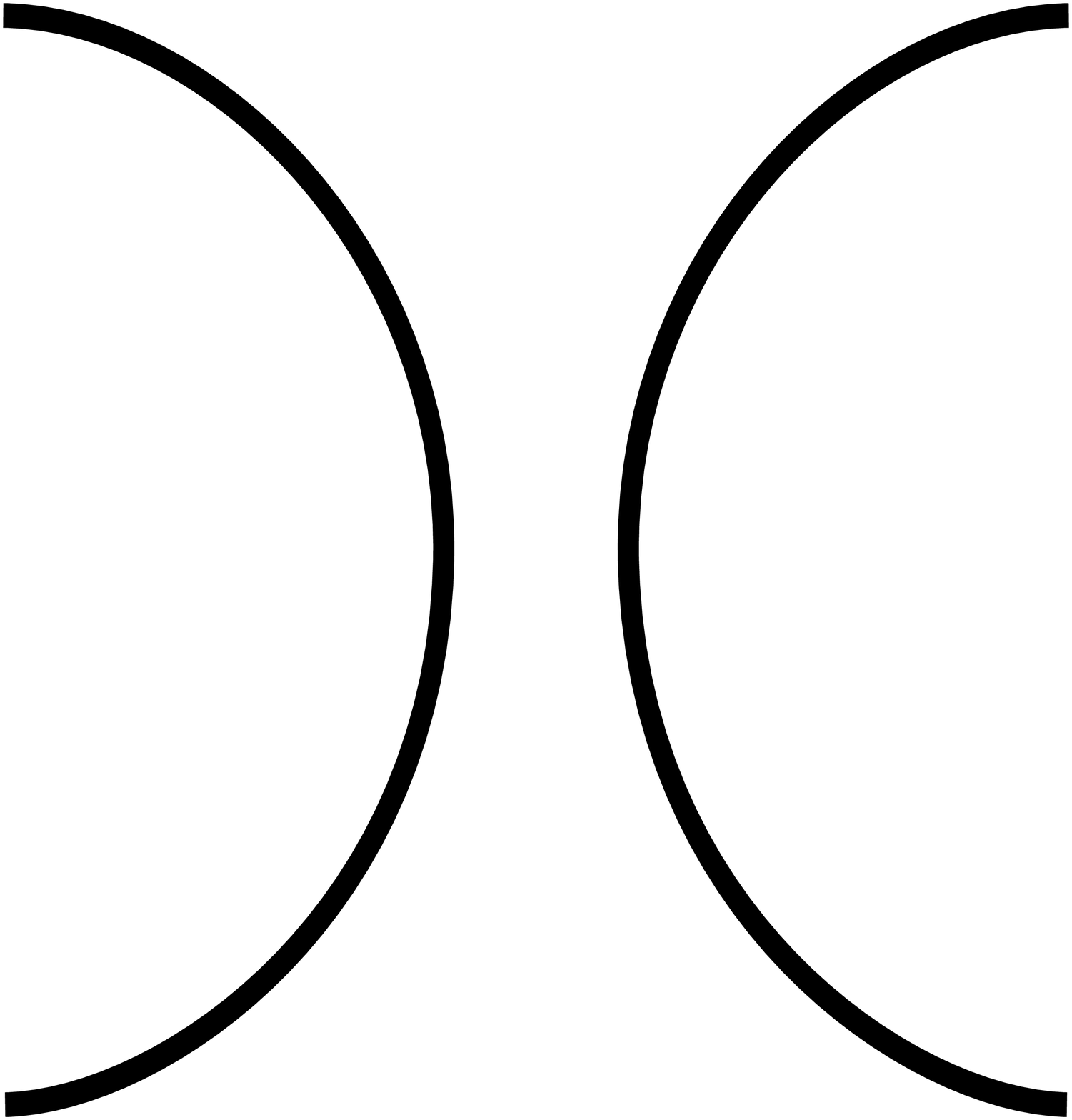}}
		\end{minipage}
		-
		A^{-1} 
		\begin{minipage}[h]{0.06\linewidth}
			\vspace{0pt}
			\scalebox{0.04}{\includegraphics{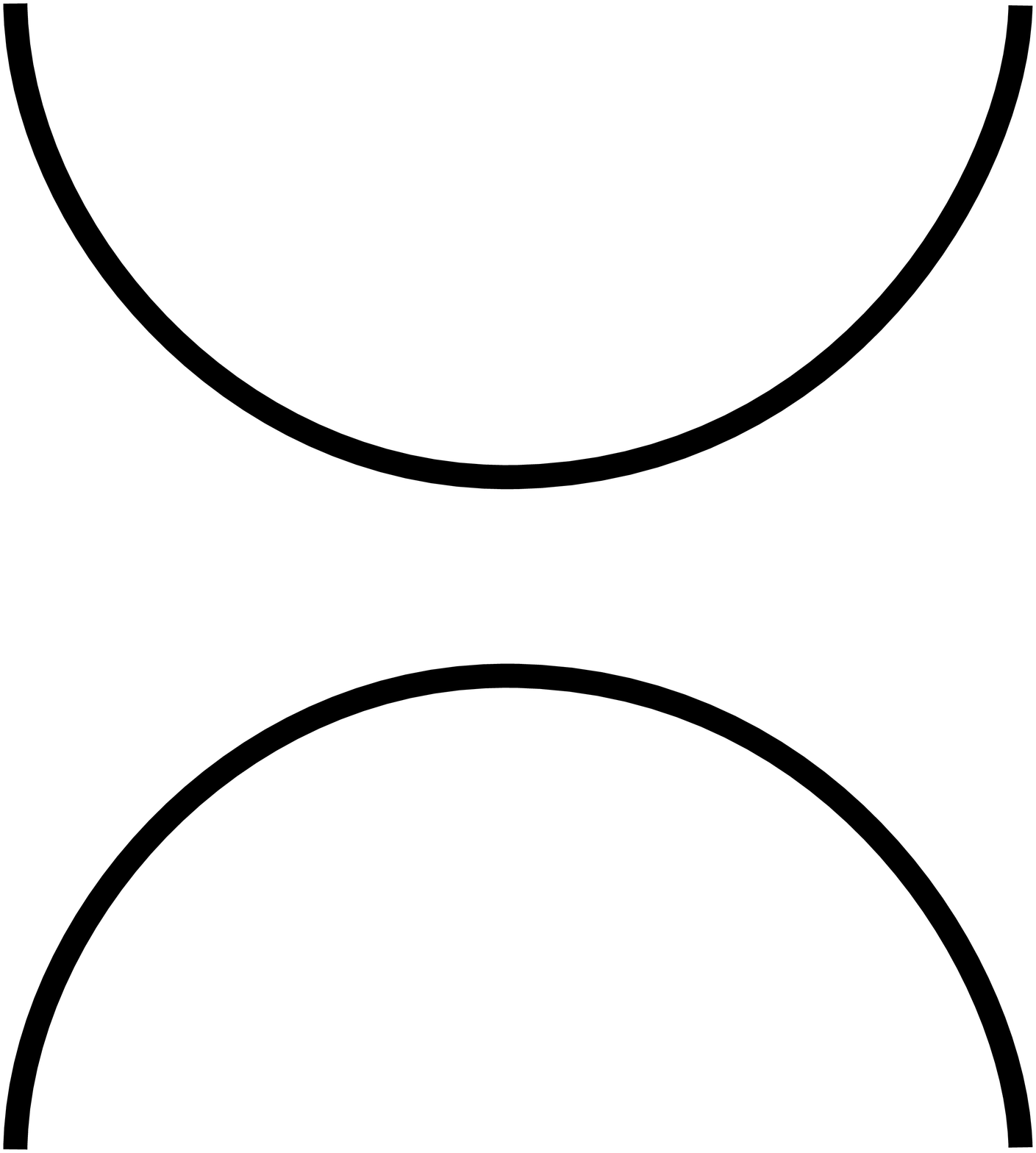}}
		\end{minipage}
		, \hspace{20 mm}
		(2)\hspace{3 mm} L\sqcup
		\begin{minipage}[h]{0.05\linewidth}
			\vspace{0pt}
			\scalebox{0.02}{\includegraphics{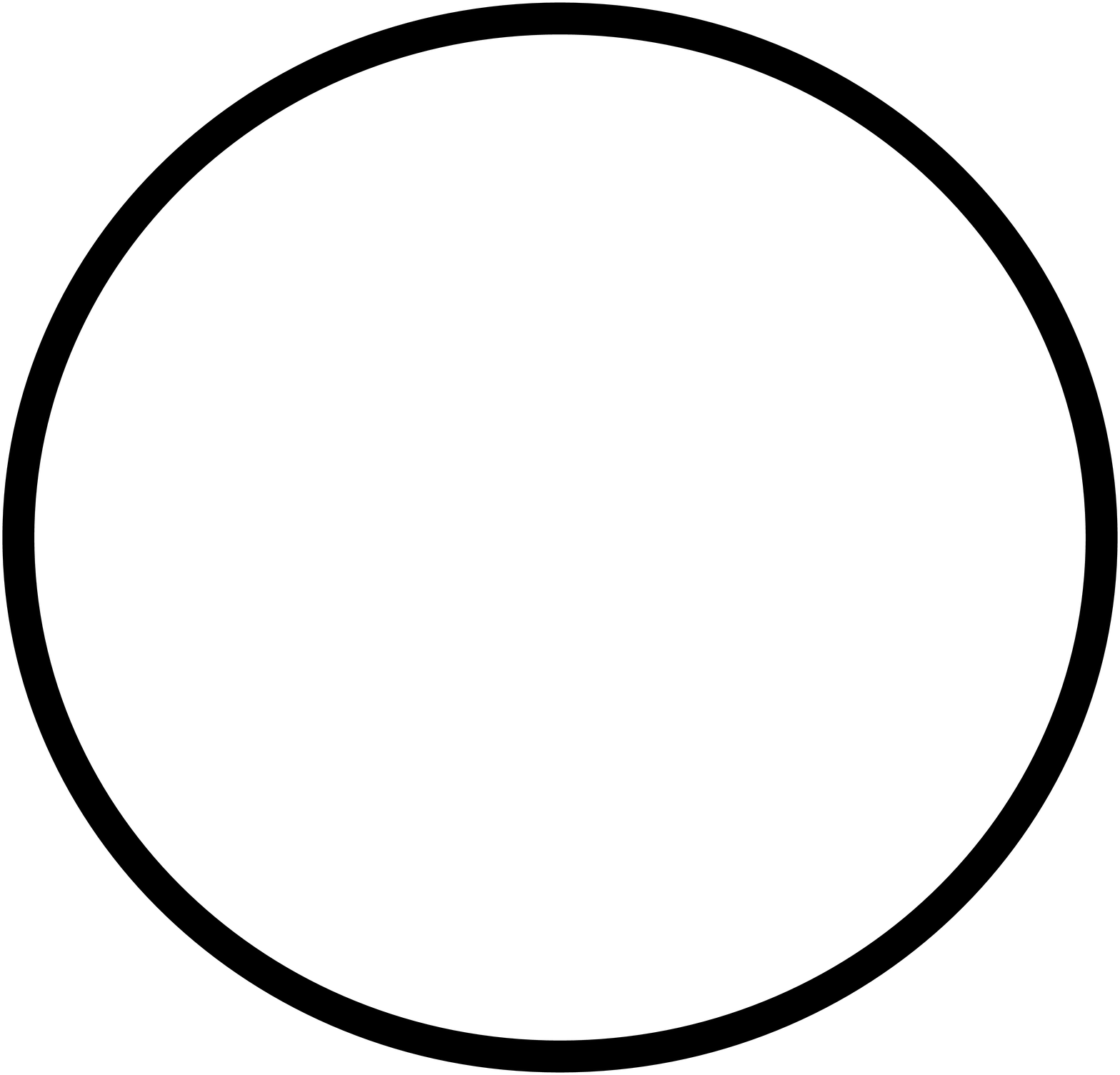}}
		\end{minipage}
		+
		(A^{2}+A^{-2})L. 
	\end{eqnarray*}
	where $L\sqcup$ \begin{minipage}[h]{0.05\linewidth}
		\vspace{0pt}
		\scalebox{0.02}{\includegraphics{simple-circle}}
	\end{minipage}  consists of a framed link $L$ in $M$ and the zero-framed knot 
	\begin{minipage}[h]{0.05\linewidth}
		\vspace{0pt}
		\scalebox{0.02}{\includegraphics{simple-circle}}
	\end{minipage} that bounds a disc in $M$.
\end{definition}    
In this article the ring $\mathcal{R}$ will be the ring of all Laurent polynomials $\mathbb{Z}[A,A^{-1}]$ except for the last section where we will need the field $\mathbb{Q}(A)$ of rational functions in the indeterminate $A$.  Furthermore, if $F$ is an oriented surface and $M=F \times I$ then we will refer to $\mathcal{S}(M)$ by $\mathcal{S}(F)$ and refer to this module by the Kauffman bracket skein module of $F$.

In this paper we will use the skein modules of the sphere $S^{2}$, the disc $D^{2}$ with $2n$ marked points on the boundary, and the annulus $S^1\times I$. The Kauffman bracket skein module of the sphere is isomorphic to the ring $\mathbb{Z}[A,A^{-1}]$. To see this, let $D$ be any diagram in $S^2$. The Kauffman relations can be used to eliminate all crossings and unknots in $D$. Thus we can write $D =\langle D \rangle\phi$ in $\mathcal{S}(S^{2})$, where $\langle D \rangle\in \mathbb{Z}[A,A^{-1}]$ is the normalized Kauffman bracket of $D$. This gives us an isomorphism induced by sending $D$ to $\langle D \rangle$. The relative Kauffman bracket skein module of the disc with $2n$ marked points on the boundary will be denoted by $\mathcal{S}(D^2,2n)$. We fix $n$ points on the top of the disc and $n$ points on the bottom. This module can be made into an associative algebra over $\mathbb{Z}[A,A^{-1}]$  by the obvious diagram vertical juxtaposition known as the \textit{$n^{th}$ Temperley-Lieb algebra} $TL_n$ \cite{jones}. In $TL_n$ there are $c_n$ non-isotopic diagrams consisting of non-crossing arcs connecting the $2n$ boundary points of the disc. These diagrams are usually called \textit{crossingless matching diagrams} on $2n$ nodes. As a $\mathbb{Z}[A,A^{-1}]$ -module, $TL_n$ is spanned by all crossingless matching diagrams on $2n$ nodes. It is known that there are $c_n= \frac{1}{n+1} {2n \choose n} 
$ such diagrams in $TL_n$ where $c_n$ denotes the $n$th Catalan number.\\

 The Kauffman bracket skein module of the annulus is $\mathbb{Z}[A,A^{-1}][z]$. Here $z$ represents the zero-framed knot parallel to $\partial(S^{1}\times I)$ and $z^n$ represents $n$ parallel copies of $z$. See Figure \ref{basis}

\begin{figure}[h]
  \centering
   {\includegraphics[scale=0.2]{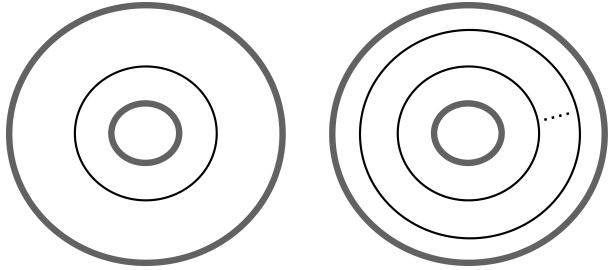}
    \caption{On the left $z$ and on the right $z^n$.}
  \label{basis}}
\end{figure}
\subsection{Wiring Maps}

Let $F$ and $F^{\prime}$ be two oriented surfaces with marked points. 
A \textit{wiring map} \cite{mortan} from $F$ to $F^{\prime}$ is an embedding of $F$ into $F^{\prime}$ such that the marked points in $F$ coincide with those of $F^{\prime}$ and a fixed \textit{wiring diagram} of arcs and curves in $F^{\prime}-F$ such that the marked points of the arcs consist of the marked points in $F$ and $F^{\prime}$. In this way a diagram in $F$ induces a diagram $\mathcal{W}(D)$ in $F^{\prime}$ by extending $D$ by the wiring diagram. It is clear that a wiring $\mathcal{W}$ from $F$ to $F^{\prime}$ induces a module homomorphism:
\begin{equation*}
\mathcal{S}(\mathcal{W}):\mathcal{S}(F)\longrightarrow \mathcal{S}(F^{\prime}).
\end{equation*} 

\begin{example}
Consider the square $I \times I$ with $n$ marked points on the top edge
and $n$ marked points on the bottom edge. Embed $I\times I$ in $S^1\times I$ and join the $n$ points
on the top edge to the $n$ points on the bottom edge by parallel arcs as follow:
\begin{figure}[H]
  \centering
   {\includegraphics[scale=0.45]{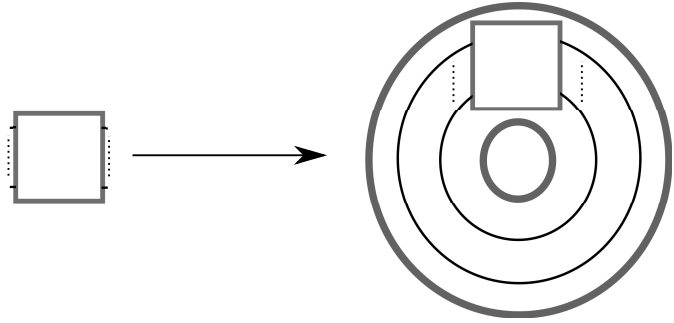}
    \caption{Mapping a disc with $2n$ marked points on the boundary to the annulus torus.}
  \label{wire}}
\end{figure}
For each $n$ this wiring defines a map :
\begin{equation}
\label{src}
\mathcal{STC}_n : TL_n \longrightarrow \mathcal{S}(S^1 \times I).
\end{equation}
For a diagram $D$ in $TL_n$ we call $\mathcal{STC}_n$ the solid torus closure of $D$.
\subsection{The Jones Wenzl Projector}
In section \ref{Generalizations} we extend our work on rational links in the solid torus to colored diagrams in oriented surfaces. For this reason we will also need the definition of the Jones-Wenzl projector \cite{jones2}. Here we utilize the graphical definition of the Jones-Wenzl projector $f^{(n)}$ as given by \cite{Lickbook}. The $n^{th}$ Jones-Wenzl projector is a special idempotent element in $TL_n$ that can be characterized by the following graphical equations: 

\begin{eqnarray}
\label{properties}
\hspace{0 mm}
    \begin{minipage}[h]{0.21\linewidth}
        \vspace{0pt}
        \scalebox{0.115}{\includegraphics{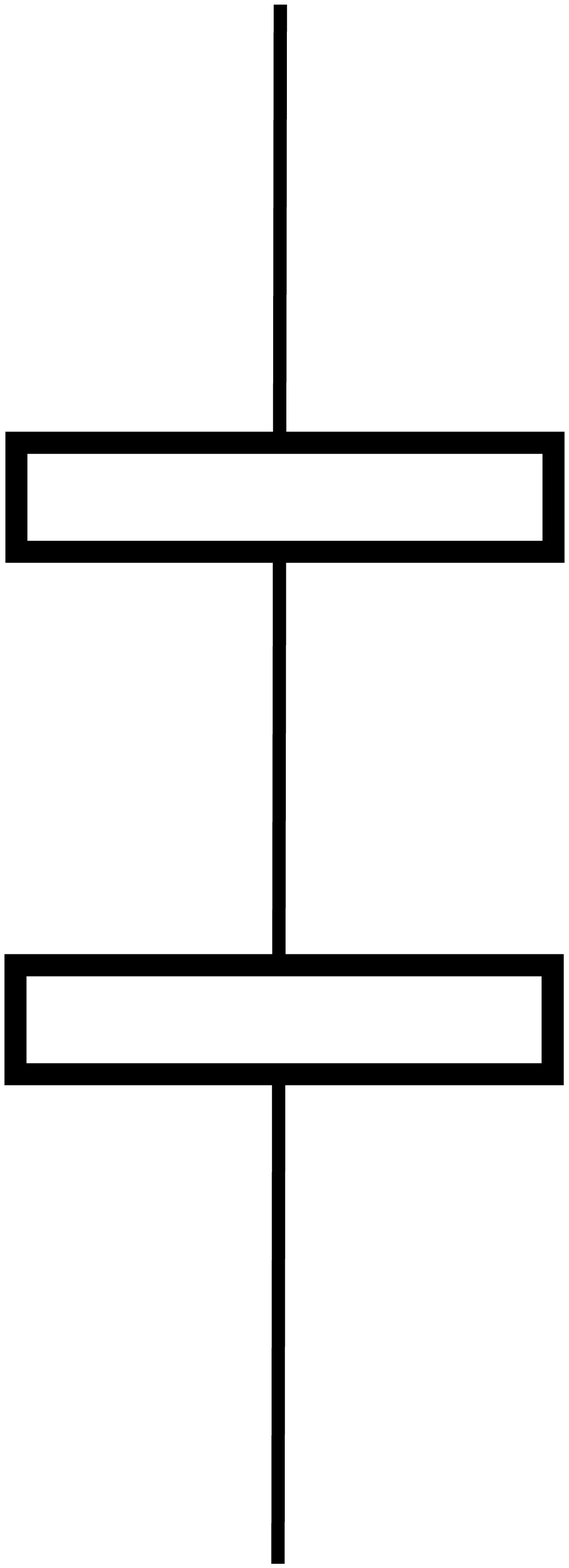}}
        \put(0,+80){\footnotesize{$n$}}
       
   \end{minipage}
  = \hspace{5pt}
     \begin{minipage}[h]{0.1\linewidth}
        \vspace{0pt}
         \hspace{50pt}
        \scalebox{0.115}{\includegraphics{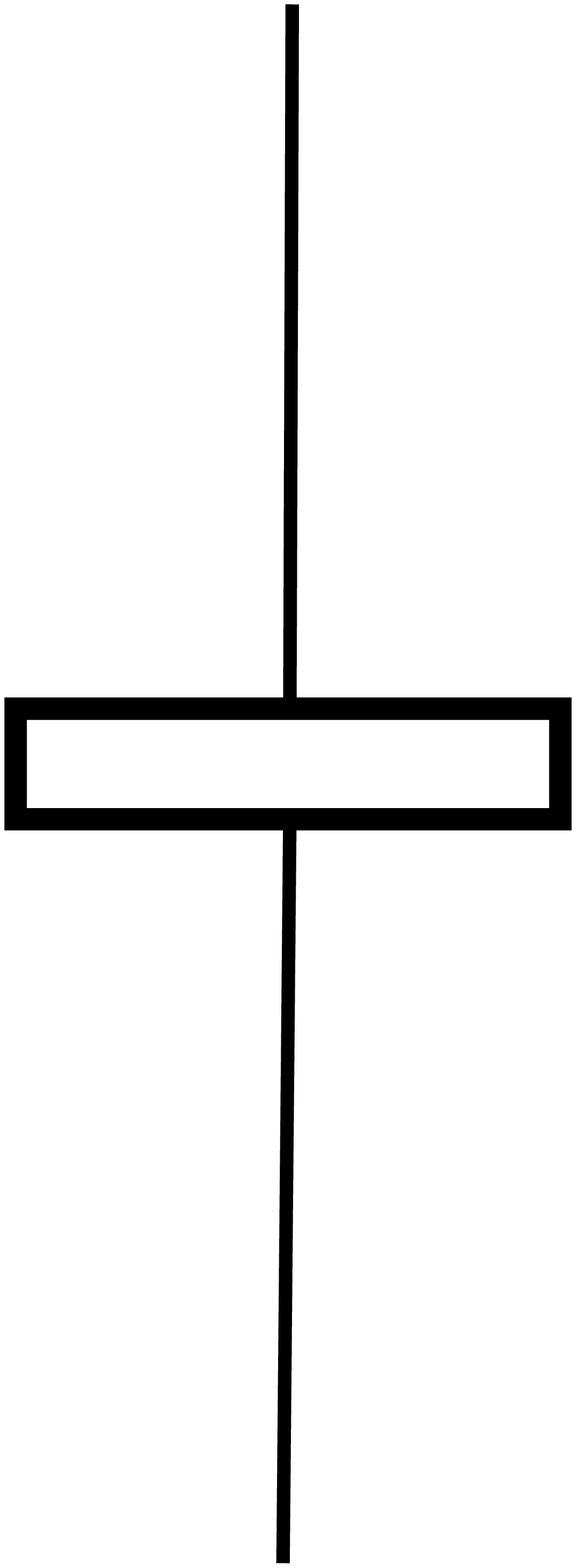}}
        \put(-60,80){\footnotesize{$n$}}
   \end{minipage}
    , \hspace{15 mm}
    \begin{minipage}[h]{0.09\linewidth}
        \vspace{0pt}
        \scalebox{0.115}{\includegraphics{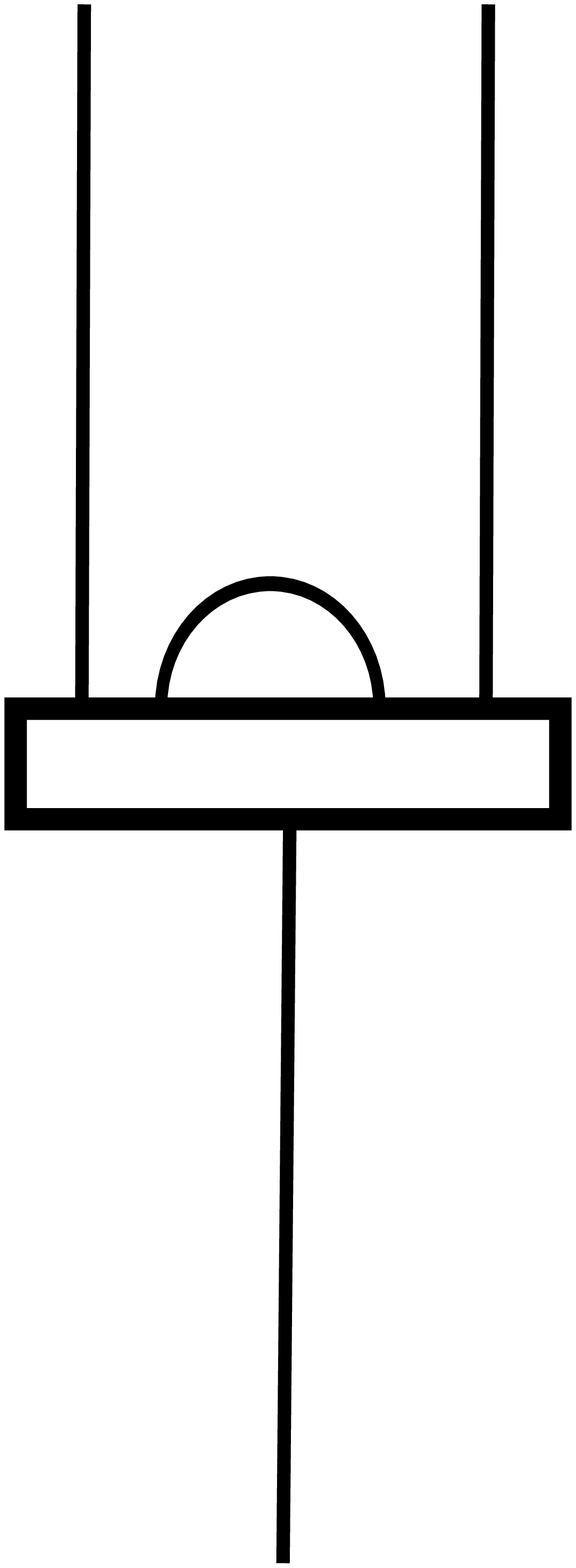}}
         \put(-70,+82){\footnotesize{$n-i-2$}}
         \put(-20,+64){\footnotesize{$1$}}
        \put(-2,+82){\footnotesize{$i$}}
        \put(-28,20){\footnotesize{$n$}}
   \end{minipage}
   =0,
   \label{AX}
  \end{eqnarray}
Here the idempotent $f^{(n)}$ is depicted by the box with $n$ strands coming from the upper part of the box and $n$ strands leaving the lower part.  For other applications of the Jones-Wenzl projector see \cite{armond,EH,hajij4,hajij2,hajij3}. We will also need the following equation:    
  \begin{eqnarray}
  \label{thisthis}
     \begin{minipage}[h]{0.08\linewidth}
        \vspace{-5pt}
        \scalebox{0.19}{\includegraphics{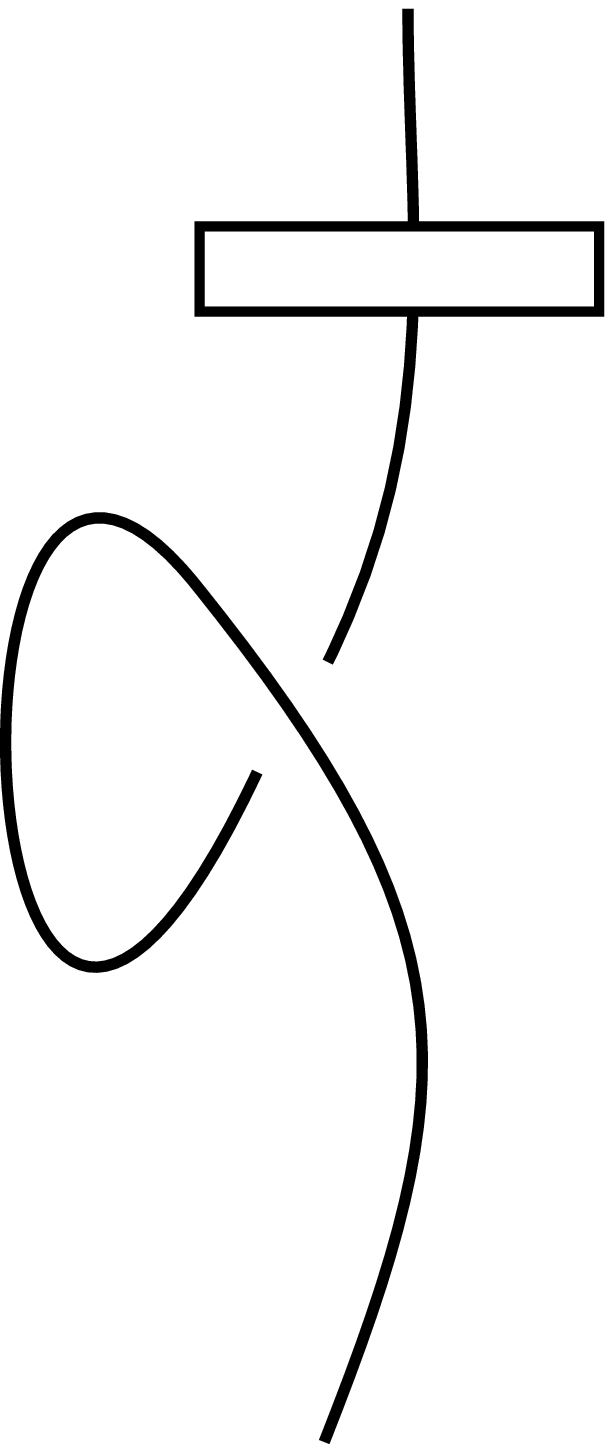}}
        \put(-22,+75){\footnotesize{$n$}}
   \end{minipage}
  =\mu_n
     \begin{minipage}[h]{0.09\linewidth}
        \hspace{5pt}
        \scalebox{0.19}{\includegraphics{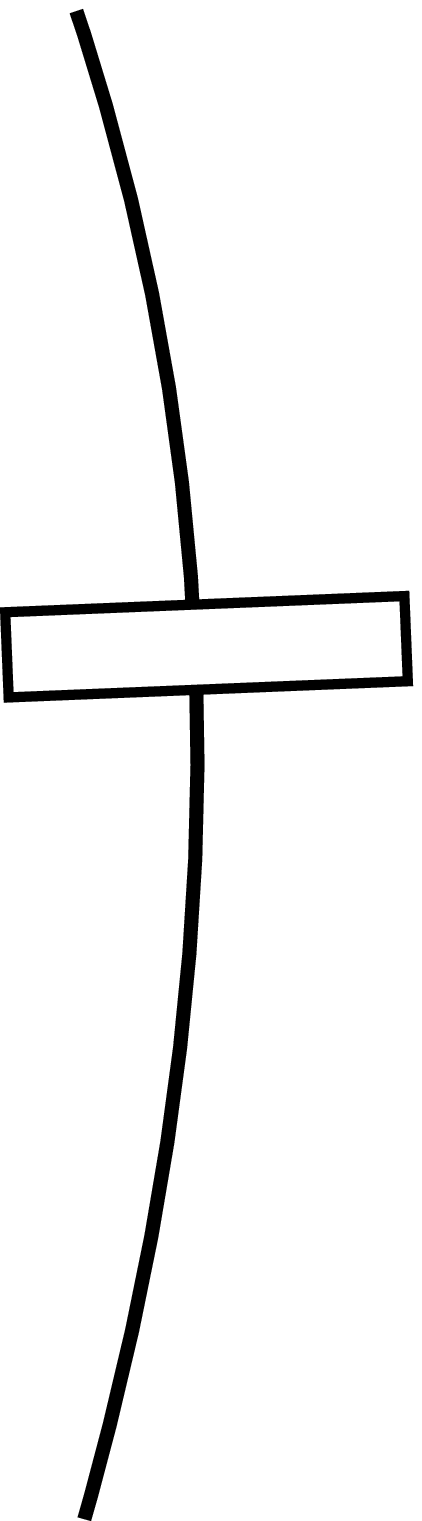}}
        \put(-6,55){\footnotesize{$n$}}
   \end{minipage},
   \end{eqnarray}

where $\mu_n=(-1)^nA^{-n^2-2n}$. For more details about the Jones-Wenzl projector and its properties see \cite{Lic92}.
\subsection{Colored Kauffman Bracket Skein Modules} Some of our results can be generalized to colored links on in the Kaufman bracket skein module of oriented surfaces with marked points on the boundary. For this purpose we introduce these modules here.

Consider the skein module of the disc $D$
with $a_{1}+...+a_{m}$ marked points on the boundary. In this article we are interested in certain submodules of $D$. This submodule is constructed as follows. Partition the set
of $n=a_{1}+...+a_{m}$ marked points on the boundary of $D$ into $m$ clusters such that the $i^{th}$ cluster has $a_{i}$ points for $1 \leq i \leq n$. We place a Jones-Wenzl idempotent $f^{(a_{i})}$ at each cluster of $a_{i}$ points. Note that an element of this skein module is obtained by taking an element in the module of the disc with $a_{1}+...+a_{m}$ marked points and then adding the idempotents  $f^{(a_1)},...,f^{(a_m)}$ on the outside of the disc. In particular when we have $2m$ clusters of points on the boundary of $D$ each one of them with exactly $n$ points ($m$ clusters on the top of $D$ and $m$ on the bottom) then this space forms a generalization of the Temperley-Lieb algebra called \textit{the colored Temperley-Lieb algebra} and is denoted by $TL^{n}_{m}$. Note that $TL^{1}_{m}=TL_{m}$.  See Figure \ref{disc example} for an example of such a module. Finally, if $D$ has $k$ clusters of $n$ points on the top and $l$ clusters of $n$ points on the bottom (and again place the idempotents on the outside at the clusters) then we will denote this space by $\mathcal{D}_{l}^{k}(n)$. This construction can be generalized to any $3$-manifold $M=F\times I$ where $F$ is an orientable surface with a boundary. In other words, one can consider the Kauffman bracket skein module with $a_1,...,a_m$ marked points on the boundary of $F$ and stick idempotents on the outside of $F$. This module will be denoted by $\mathcal{S}(F_{a_1,...,a_m})$. When all the cardinalities of the clusters are all the same, say $n$, we will denote this module by $\mathcal{S}(F_{m}(n))$. Figure \ref{disc example} gives an example of such construction where $F$ is the disk $D$ and we have 4 cluster of boundary points $a_1$, $a_2$, $a_3$ and $a_4$ the cardinality of these clusters is $n$, i.e., $a_i=n$ for all $i$. 
\begin{figure}[H]
  \centering
   {\includegraphics[scale=0.16]{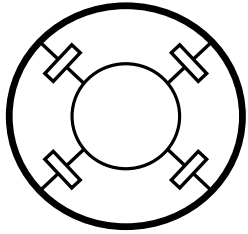}
    \caption{The Kauffman bracket skein module $TL^n_4$.}
  \label{disc example}}
\end{figure}

\end{example}

\subsection{Main Results}
In this section we state the main result of this article.

We define rational knots and links in the solid torus and we prove that they are completely characterized with rational tangles:\\

{ \bf Theorem} 
{ \ref{main theorem} \em  
Let $T_{1}$ and $T_{2}$ be two rational tangles, then $\overline{T_{1}}\simeq \overline{T_{2}}$ if and only if $\
T_{1}\simeq T_{2}$.
}\\

Here $\overline{T}$ is the closure of a $2$-tangle $T$ inside the solid torus defined in Figure \ref{closures1}.  
Theorem \ref{main theorem} allows us to bring all the tools available in the study of rational tangles to use them in the study of rational knots in the solid torus. The main ingredient of this result is a theorem by Goldman and Kauffman \cite{MR1436484}, which states that the ratio of the coefficients of the expansion of a 2-tangle $T$ in standard Kauffman basis is an ambient isotopy invariant of the tangle $T$. We generalize this result to colored diagrams in any oriented surface:\\

{ \bf Theorem} 
{ \ref{main theorem2} \em  Let $D$ be a diagram in a surface $F$ with $m$ marked points. Let $D^{n}$ be the diagram obtained from $D$ by coloring all its components with the $n^{th}$ Jones-Wenzl projector. Let $\mathcal{E}= \{E_i\}_{i=0}^s$ be a basis for the skein module $\mathcal{S}(F_{m}(n))$ and let $\gamma_i(D)$ be the $i^{th}$ coefficient obtained from expanding $D^n$ in terms of the basis $\mathcal{E}$. Then for any $ 0\leq i \leq s-1 $ the ratios:
\begin{equation*}
CR^i_D(A)=\frac{\gamma_i(D)}{\gamma_{s}(D)} 
\end{equation*} 
are ambient isotopy invariants of the  diagram $D$.
}\\

\section{Rational Tangles} \label{rational tangles section}
The $2$-tangles in Figure \ref{smoothings} are denoted from left to right by $\left[ \infty \right] $,   $\left[ 0\right] $, $%
\left[ 1\right] $ and $\left[ -1\right] $.

\begin{figure}[H]
  \centering
   {\includegraphics[scale=0.2]{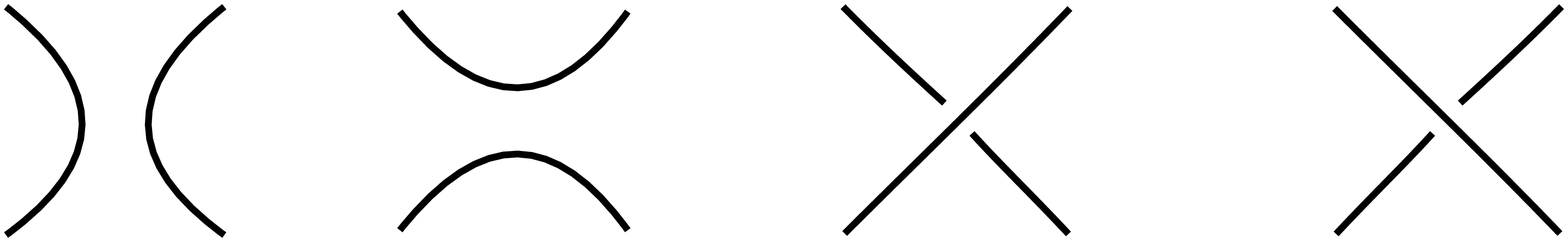}
   \vspace{10pt}
    \caption{From left to right : the $\left[ \infty \right] $ tangle, the $\left[ 0\right] $ tangle, the $%
\left[ 1\right] $ tangle and the $\left[ -1\right] $ .}
  \label{smoothings}}
\end{figure}
\vspace{-20pt}
 \emph{An integer tangle} $\left[ n\right] $ is the result of
winding two horizontal strands around each other to get a tangle that
involves $\left\vert n\right\vert $ crossings. If the upper strand at each crossing has positive slope, then $n$ is positive. Otherwise, $n$ is negative. See Figure \ref{three} for the tangle $\left[ 3\right] $ and the
tangle $\left[ -3\right] $.  Note that some authors use the opposite convention such as in \cite{dna}.
\begin{figure}[h]
  \centering
   {\includegraphics[scale=0.2]{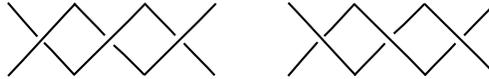}
    \caption{Examples of integer tangles. On the left the tangle $[3]$ and on the right the tangle $[-3]$.}
  \label{three}}
\end{figure}

Let $n$ be an integer representing a tangle diagram $\left[ n\right] .$
The mirror image of $\left[ n\right] $ in the diagonal line
connecting NW and SE is denoted by the tangle diagram $\dfrac{1}{%
\left[ n\right] }$ (see Figure \ref{this isstrange} for the tangles $\left[ 4\right] $ and $%
\dfrac{1}{\left[ 4\right] }$, respectively).
\begin{figure}[h]
  \centering
   \includegraphics[scale=0.2]{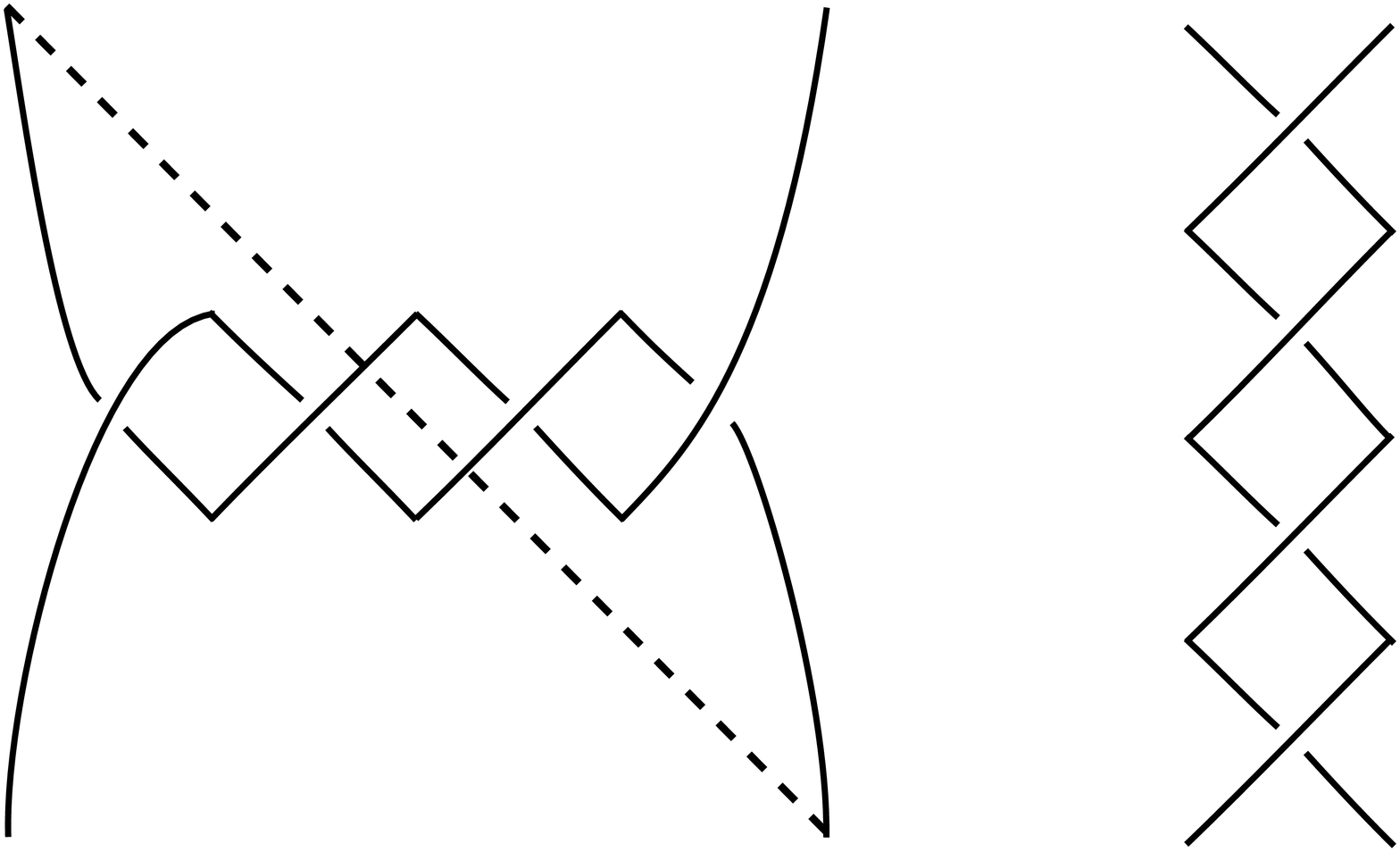}
    \caption{The tangle $[4]$ on the left and the tangle $\frac{1}{[4]}$ on the right}
    \label{this isstrange}
\end{figure}

Let $a_{1}$, $a_{2}$, $\cdots $, $a_{m}$ be a finite sequence of integers. The \textit{rational tangle} $\left[ a_{1}\text{ }a_{2}\cdots a_{m}\right] $ is constructed as follows. We distinguish two cases depending on the parity of $m$. If $m$ is an odd number, draw the horizontal
diagram $\left[ a_{1}\right] $ on the top and draw $\dfrac{1}{\left[ a_{2}%
\right] }$ on the bottom and connect the close ends together then draw the
diagram $\left[ a_{3}\right] $ to the right and connect the close ends
together then draw the diagram $\dfrac{1}{\left[ a_{4}\right] }$ below and
connect the close ends together. Continuing in this manner gives a unique
tangle diagram denoted by $\left[ a_{1}\text{ }a_{2}\cdots a_{m}\right] $. If $m$ is even, we start with the vertical tangle diagram $\dfrac{1}{\left[
a_{1}\right] }$ on the left and the horizontal tangle diagram $\left[ a_{2}%
\right] $ on the right and we connect the close ends together then draw the
diagram $\dfrac{1}{\left[ a_{3}\right] }$ below the previous tangle and
connect the close ends together. Continue this process alternatively to get
the diagram of the tangle $\left[ a_{1}\text{ }a_{2}\cdots a_{m}\right] .$ 
See Figure \ref{examples of rational tangles} for some examples. More details can be found in \cite{MR2079925}. In the next section we will also give a definition of the rational tangles that utilizes tangle operations.
\begin{figure}[h]
  \centering
   {\includegraphics[scale=0.5]{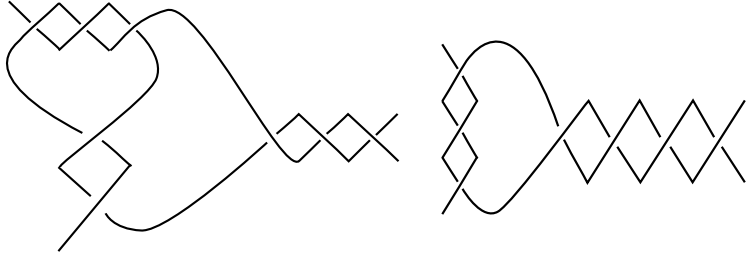}
    \caption{The tangle $[3,2,-3]$ on the left and the tangle $[3,4]$ on the right}
  \label{examples of rational tangles}}
\end{figure}
\subsection{Operations on Tangles} Let $T$ and $S$ be  $2$-tangles. The tangles $T+S,$ $-T,$ and $1/T$ are defined as illustrated in Figure \ref{operations}, \begin{figure}[H]
  \centering
   {\includegraphics[scale=0.15]{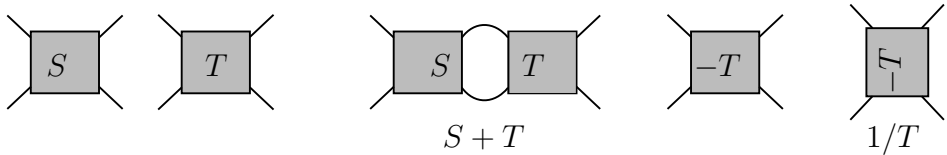}
    \put (-195,17) {\large{$S$}}
    \put (-160,17) {\large{$T$}}
    \put (-190,-10) {\large{$S+T$}}
    \put (-340,17) {\large{$S$}}
    \put (-280,17) {\large{$T$}}
    \put (-95,17) {\large{$-T$}}
    \put (-25,10) {\large{\rotatebox{90}{$-T$}}}
    \put (-30,-10) {\large{$1/T$}}
    \caption{Operations on tangles. }
    \label{operations}
  }
\end{figure}

\noindent Here $-T$ is the mirror image of $T$ and $1/T$ is the mirror image of $T$  through the diagonal line connecting NW and SE. Now we give the definition of rational tangles and their fractions.

Let $a_1,...,a_m$ be integers such that $a_{2},\ldots ,a_{m-1}\in 
\mathbb{Z}
-\left\{ 0\right\} $, $a_{1}\in 
\mathbb{Z}
$ and $a_{m}\in 
\mathbb{Z}
\cup \left\{ \infty \right\}.$ The rational tangle $\left[ a_{1}\text{ }a_{2}\cdots a_{m}\right] $ is constructed from the tangles $[a_i]$, $1 \leq  i \leq m$, via the tangle operations as follows:
\begin{equation*}
\label{fraction}
\left[ a_{1}\text{ }a_{2}\cdots a_{m}\right] :=\left[ a_{m}\right] +\dfrac{1%
}{\left[ a_{m-1}\right] +\dfrac{1}{%
\begin{array}{cc}
\ddots &  \\ 
& \dfrac{1}{\left[ a_{2}\right] +\dfrac{1}{\left[ a_{1}\right] }}%
\end{array}%
}}.
\end{equation*}%

 Note that the following identity is true%
\begin{equation*}
\left[ \infty \right] =\dfrac{1}{\left[ 0\right] }.
\end{equation*}%

Note that we use the convintion that the order in the notation of a rational tangle is the order of creation of the tangle, which agrees with the notation used in \cite{MR2079925}. In fact many other articles use a reversed ordering, which rather agrees with the order in the continued fraction, where the continued fraction is defined as follows.
\bigskip Let $T=\left[ a_{1}\ldots a_{m}\right] $ be a rational tangle, with 
$a_{1},\ldots ,a_{m-1}\in 
\mathbb{Z}
-\left\{ 0\right\} ,$ and $a_{m}\in 
\mathbb{Z}
.$ The \emph{continued fraction}, or simply the fraction, $F(T)$ is
\begin{equation*}
F(T)=a_{m}+\dfrac{1}{a_{m-1}+\dfrac{1}{%
\begin{array}{cc}
\ddots &  \\ 
& \dfrac{1}{a_{2}+\dfrac{1}{a_{1}}}%
\end{array}%
}}.
\end{equation*}
 The following classification theorem is due to Conway \cite{Co}.

\begin{theorem}
Two rational tangles are isotopy equivalent if and only if they have the
same fraction.
\end{theorem}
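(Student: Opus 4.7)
The plan is to prove the two implications separately: the forward direction via an \emph{a priori} ambient isotopy invariant of 2-tangles taking values in $\mathbb{Q} \cup \{\infty\}$, and the reverse direction by showing that every rational tangle reduces canonically to a standard form determined by its fraction. For the forward direction I would use the Goldman--Kauffman construction alluded to in the introduction. The relative skein module $\mathcal{S}(D^2, 4) = TL_2$ is free of rank $c_2 = 2$ with basis $\{[0], [\infty]\}$, so every 2-tangle $T$ expands uniquely as $T = \alpha(T)[0] + \beta(T)[\infty]$, with $\alpha(T), \beta(T) \in \mathbb{Z}[A, A^{-1}]$. The ratio $\mathcal{F}(T) := \beta(T)/\alpha(T) \in \mathbb{Q}(A)$ is a regular isotopy invariant directly from the skein relations, and since a Reidemeister~I move multiplies both $\alpha$ and $\beta$ by the same factor $-A^{\pm 3}$, the ratio is in fact an ambient isotopy invariant. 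By direct computation in $TL_2$ one verifies $\mathcal{F}([n])$ agrees with $n$ after the appropriate specialization, and that on rational tangles the operations satisfy $\mathcal{F}(T + [n]) = \mathcal{F}(T) + \mathcal{F}([n])$ and $\mathcal{F}(1/T) = 1/\mathcal{F}(T)$. Unrolling the recursive definition of $[a_1\, a_2 \cdots a_m]$ then gives $\mathcal{F}(T) = F(T)$, so isotopic rational tangles have equal fractions.

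For the converse, I would argue that two rational tangles with the same fraction are both ambient isotopic to a common canonical representative. The Euclidean algorithm assigns to $p/q \in \mathbb{Q} \cup \{\infty\}$ a unique reduced (positive) continued fraction $(b_1, \ldots, b_k)$, which determines a canonical rational tangle $[b_1\, b_2 \cdots b_k]$. I would then show that any expansion $[a_1\, a_2 \cdots a_m]$ with $F([a_1 \cdots a_m]) = p/q$ is ambient isotopic to this canonical form, by induction on $m$ using two families of diagrammatic moves: the length-reducing absorption
\[
[a_1, \ldots, a_{m-1}, 1] \;\simeq\; [a_1, \ldots, a_{m-1} + 1],
\]
realized by a Reidemeister~II sweep in the outer twist region, and sign-flipping moves that trade a negative entry against a positive neighbor, realized as local flypes. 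Iterating these reductions brings any rational tangle into the canonical form corresponding to its fraction.

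The hardest part will be the converse direction, and specifically realizing each algebraic identity on continued fractions as an honest ambient isotopy of tangle diagrams. The difficulty is combinatorial bookkeeping: because the construction of $[a_1\, a_2 \cdots a_m]$ alternates between horizontal and vertical twist placements depending on the parity of $m$, a single algebraic step (such as absorbing a trailing $\pm 1$ or flipping signs) looks geometrically different in the two parities and must be checked in each case. A clean route is to package the needed moves through the flype equivalences of Kauffman--Lambropoulou, which give a geometrically transparent toolkit of diagram moves preserving the fraction; an alternative that avoids invoking flype theory is a careful induction that peels off the outer integer tangle $[a_m]$ one twist at a time, reducing to a shorter continued fraction and applying the inductive hypothesis. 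Either route requires verifying at the level of planar diagrams that every elementary continued-fraction identity can be exhibited as an explicit finite sequence of Reidemeister moves.
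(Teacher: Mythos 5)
The paper does not prove this statement: it is quoted as Conway's classification theorem with a citation to \cite{Co}, and the surrounding results (Theorems \ref{Kauffman Goldman} and the identity $C(T)=F(T)$) are likewise imported from \cite{MR1436484}. So there is no internal proof to compare against; your proposal has to be judged against the standard literature arguments, and on that score it is essentially the accepted route. Your forward direction is the Goldman--Kauffman argument, and your converse is the Kauffman--Lambropoulou canonical-form reduction, both of which the paper itself leans on elsewhere.

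Two points of caution. First, in the forward direction the ratio $\beta(T)/\alpha(T)$ is an ambient isotopy invariant for every $A$, but the identities $\mathcal{F}(T+[n])=\mathcal{F}(T)+n$ and $\mathcal{F}(1/T)=1/\mathcal{F}(T)$ do \emph{not} hold for generic $A$; they hold only after the specialization $A=\sqrt{i}$ (with the normalizing factor $-i$, i.e.\ for $C(T)=-iR_T(\sqrt{i})$), and the inversion identity additionally uses that mirroring sends $A\mapsto A^{-1}$, which at $A=\sqrt{i}$ is complex conjugation. You gesture at ``the appropriate specialization'' but should state clearly that the entire computation of $\mathcal{F}$ on rational tangles lives at that one value of $A$. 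Second, your converse compresses the genuinely hard content into ``realize each continued-fraction identity as an isotopy.'' The absorption $[a_1,\ldots,a_{m-1},1]\simeq[a_1,\ldots,a_{m-1}+1]$ and the sign-trading moves are not enough by themselves; you also need the transfer moves that let you push twists between the two ends of a twist region (the flype lemma of \cite{MR1953344}), and the uniqueness statement for continued fractions must be phrased with a normalization (all entries of one sign, last entry of absolute value greater than $1$, or fixed parity of length) to pin down a single canonical representative. These are all in the literature, but as written your sketch would not compile into a proof without importing that machinery explicitly.
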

Let $\left\langle T\right\rangle =\alpha (A)\left\langle \left[ \infty \right]
\right\rangle +\beta (A)\left\langle \left[ 0\right] \right\rangle$ be the bracket of a 2-tangle $T$, where $\alpha(A)$ and $\beta(A)$ in $%
\mathbb{Z}
\lbrack A,A^{-1}]$. Since $[0]$ and $[\infty]$ form a basis for $TL_2$, then $\alpha(A)$ and $\beta(A)$ are regular isotopy invariants of the tangle $T$. Furthermore, their ratio is an ambient isotopy invariant as can be seen from the following theorem of Goldman and Kauffman \cite{MR1436484}.

\begin{theorem}
\label{Kauffman Goldman}
Let $T$ be a rational tangle and let $\left\langle T\right\rangle =\alpha
\left( A\right) \left\langle \infty \right\rangle +\beta \left( A\right)
\left\langle 0\right\rangle $ be its bracket, then for any choice of $A,$%
\begin{equation*}
R_{T}(A)=\frac{\alpha \left( A\right) }{\beta \left( A\right) }
\end{equation*}%
is an ambient isotopy invariant of tangles.
\end{theorem}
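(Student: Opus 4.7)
The plan is to deduce ambient isotopy invariance of $R_T(A)$ from the well-known regular isotopy invariance of the Kauffman bracket, together with the observation that both coefficients $\alpha(A)$ and $\beta(A)$ transform by the same scalar under a Reidemeister I move.

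First I would record that $\{[0],[\infty]\}$ is a free $\mathbb{Z}[A,A^{-1}]$-basis of $TL_2$, so the expansion
\[
\langle T \rangle \;=\; \alpha(A)\langle\infty\rangle + \beta(A)\langle 0\rangle
\]
is unique. Because the bracket is invariant under Reidemeister II and III (this is built into the skein relations, which pair each $A$-smoothing state with the reverse $A^{-1}$-smoothing of the appropriate sign), uniqueness of the decomposition forces each of $\alpha(A)$ and $\beta(A)$ to be a regular isotopy invariant of $T$. In particular the ratio $R_T(A)=\alpha(A)/\beta(A)$ is already regular isotopy invariant at this stage.

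Next I would address the Reidemeister I move, which is the only move distinguishing regular from ambient isotopy. A local computation using the skein relation at the kink, followed by the loop-removal relation $L\sqcup\bigcirc = -(A^{2}+A^{-2})L$, shows that introducing a positive kink on any strand of a diagram multiplies its bracket by $-A^{3}$, and a negative kink by $-A^{-3}$. If $T'$ is obtained from $T$ by such an R1 move performed locally (away from the four marked boundary points), then $\langle T'\rangle = -A^{\pm 3}\langle T\rangle$ as an element of $TL_2$. By uniqueness of the basis decomposition this forces
\[
\alpha'(A) \;=\; -A^{\pm 3}\,\alpha(A), \qquad \beta'(A) \;=\; -A^{\pm 3}\,\beta(A),
\]
and hence $R_{T'}(A) = R_T(A)$. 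Combined with the previous paragraph, $R_T(A)$ is invariant under all three Reidemeister moves and so is an ambient isotopy invariant.

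The only technical point I would need to be careful about is that the $-A^{\pm 3}$ scaling really factors out globally, independent of where the kink sits and independent of what smoothings are performed elsewhere in the diagram. This is justified by performing the skein reduction at the kink \emph{before} any other smoothings: the scalar $-A^{\pm 3}$ comes out of the full state sum, and only afterwards is the remaining diagram expanded in the $\{[0],[\infty]\}$ basis. Notice that the argument never uses rationality of $T$; it applies verbatim to any $2$-tangle whose bracket lies in $TL_2$, which is why the generalization to higher Temperley--Lieb algebras pursued in Section~\ref{Generalizations} is natural.
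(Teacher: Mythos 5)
Your proof is correct and is essentially the argument the paper relies on: the paper itself cites Goldman--Kauffman for this statement, but the identical reasoning (the coefficients in the free basis $\{[0],[\infty]\}$ of $TL_2$ are regular isotopy invariants, and a type-I move rescales the whole bracket by $-A^{\pm 3}$, hence rescales $\alpha$ and $\beta$ by the same factor and leaves the ratio unchanged) is exactly what appears in the proofs of Theorems \ref{main theorem} and \ref{colored tangles}. No gaps beyond the implicit convention (shared with the paper) that the ratio is interpreted projectively when $\beta(A)=0$.
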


In particular when $A=\sqrt{i}$, where $i$ is the imaginary complex number, we define
\begin{equation*}
C(T):= -iR_{T}(\sqrt{i})=-i\frac{\alpha \left( \sqrt{i}\right) }{\beta
\left( \sqrt{i}\right) }.
\end{equation*}%
The following theorem was proved in \cite{MR1436484}. 
\begin{theorem}
Let $T$ be a rational tangle, then%
\begin{equation*}
C(T)=F(T).
\end{equation*}
\end{theorem}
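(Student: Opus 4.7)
The plan is to prove this by induction on the length $m$ of the continued-fraction word $[a_1a_2\cdots a_m]$, matching the recursive definition of $F(T)$ with a parallel recursion for $C(T)$. The base cases $T=[0]$ and $T=[\infty]$ are immediate: $\langle[0]\rangle=\langle 0\rangle$ forces $\alpha=0,\beta=1$, giving $C([0])=0=F([0])$, and dually $C([\infty])=\infty=F([\infty])$. Since rational tangles are generated from the integer tangles via $S\mapsto S+[n]$ and $S\mapsto 1/S$, and since $F$ tautologically satisfies $F(S+[n])=F(S)+n$ and $F(1/S)=1/F(S)$, it suffices to establish the two companion identities $C(T+[n])=C(T)+n$ and $C(1/T)=1/C(T)$.

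For the first identity, it is enough to handle $n=1$ and iterate. By linearity of the bracket in the basis $\{\langle\infty\rangle,\langle 0\rangle\}$, if $\langle T\rangle=\alpha\langle\infty\rangle+\beta\langle 0\rangle$, then $\langle T+[1]\rangle=\alpha\langle[\infty]+[1]\rangle+\beta\langle[0]+[1]\rangle$. The identities $\langle[0]+[1]\rangle=\langle[1]\rangle=A\langle\infty\rangle+A^{-1}\langle 0\rangle$ and $\langle[\infty]+[1]\rangle=(Ad+A^{-1})\langle\infty\rangle=-A^3\langle\infty\rangle$ (where the $A$-smoothing of the extra crossing creates a closed interior loop contributing $d=-A^2-A^{-2}$) give
\begin{equation*}
\alpha(T+[1])=-A^3\alpha+A\beta,\qquad\beta(T+[1])=A^{-1}\beta,
\end{equation*}
so $R_{T+[1]}(A)=-A^4R_T(A)+A^2$. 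Specializing to $A=\sqrt{i}$ (where $A^4=-1$ and $A^2=i$) collapses this to $R_{T+[1]}(\sqrt{i})=R_T(\sqrt{i})+i$, and multiplying by $-i$ yields $C(T+[1])=C(T)+1$.

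For the second identity, the operation $T\mapsto 1/T$ (mirror through the NW-SE diagonal) simultaneously swaps the basis tangles $[0]\leftrightarrow[\infty]$ and reverses every crossing, so on the bracket $\alpha(1/T)(A)=\beta(T)(A^{-1})$ and $\beta(1/T)(A)=\alpha(T)(A^{-1})$, equivalently $R_{1/T}(A)=1/R_T(A^{-1})$. The main obstacle is that this does not visibly simplify at $A=\sqrt{i}$ to the desired $1/R_T(\sqrt{i})$. I would address this via an auxiliary lemma, proved by a parallel induction, asserting the symmetry $R_T(1/\sqrt{i})=-R_T(\sqrt{i})$ for every rational tangle $T$. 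This is trivial for $[0]$ and $[\infty]$, and is preserved by both recursions: the formula for $R_{T+[1]}$ gives $R_{T+[1]}(1/\sqrt{i})=R_T(1/\sqrt{i})-i=-R_T(\sqrt{i})-i=-R_{T+[1]}(\sqrt{i})$, and $R_{1/T}(A)=1/R_T(A^{-1})$ gives $R_{1/T}(1/\sqrt{i})=1/R_T(\sqrt{i})=-R_{1/T}(\sqrt{i})$. Granting the symmetry, $R_{1/T}(\sqrt{i})=1/R_T(1/\sqrt{i})=-1/R_T(\sqrt{i})$ and hence
\begin{equation*}
C(1/T)=-i\,R_{1/T}(\sqrt{i})=\frac{i}{R_T(\sqrt{i})}=\frac{1}{-i\,R_T(\sqrt{i})}=\frac{1}{C(T)}.
\end{equation*}
With both identities in hand, the inductive step $T=[a_m]+1/T'$ gives $C(T)=a_m+1/C(T')=a_m+1/F(T')=F(T)$, completing the proof.
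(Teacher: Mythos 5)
The paper does not actually prove this statement: it is imported verbatim from Goldman and Kauffman \cite{MR1436484}, so there is no in-paper argument to compare against. Your proof is correct, and it is essentially a reconstruction of the standard Goldman--Kauffman/Kauffman--Lambropoulou argument: show that $C$ satisfies the same recursion as the continued fraction, the point of the specialization $A=\sqrt{i}$ being exactly that it collapses $R_{T+[1]}(A)=-A^{4}R_{T}(A)+A^{2}$ to a unit translation. The place where you supply a genuinely nontrivial idea is the inversion step: since the diagonal mirror conjugates the bracket by $A\mapsto A^{-1}$, the identity $R_{1/T}(A)=1/R_{T}(A^{-1})$ does not evaluate directly at $A=\sqrt{i}$, and your auxiliary symmetry $R_{T}(1/\sqrt{i})=-R_{T}(\sqrt{i})$, carried through the same induction, is a clean and correct fix. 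Three small points to tighten. First, all the identities should be read projectively in $\mathbb{C}\cup\{\infty\}$, since $\beta$ can vanish at intermediate stages (already for $[\infty]$); every map involved is a M\"obius transformation, so nothing breaks, but it should be said. Second, you should record the companion computation for $[-1]$, namely $R_{T+[-1]}(A)=-A^{-4}R_{T}(A)+A^{-2}$, which gives $C(T+[-1])=C(T)-1$ and is also needed to close the induction for the symmetry lemma. Third, the sign of the final answer hinges on the smoothing convention $\langle[1]\rangle=A\langle\infty\rangle+A^{-1}\langle 0\rangle$ being the one compatible with the paper's convention that $[1]$ has a positive-slope over-strand and with the normalization $C(T)=-iR_{T}(\sqrt{i})$; your choice is internally consistent and does yield $C([1])=+1$, but this is worth checking explicitly since the opposite convention would produce $C(T)=-F(T)$.
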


\section{Rational Knots and Links in the Solid Torus}
\label{section3}

In this section we introduce the notion of rational knots in the solid torus. Furthermore, we show that rational knots in the solid torus completely characterize rational tangles. We apply this result to completely classify the isotopy type of rational knots and links in the solid torus. We start with a few definitions.
\begin{definition}
 The \emph{solid torus closure} of a 2-tangle $T$ is the closure of $T$ defined in Figure \ref{closures1}, where the projection disc of $T$ is embedded in the annulus. In the figure, the dot represents the complementary solid torus in the $3$-sphere.
 \end{definition}
 
\begin{figure}[h]
  \centering
   {\includegraphics[scale=0.1]{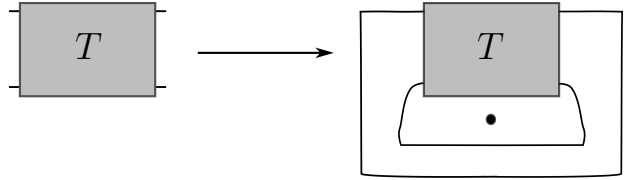}
     \put (-55,45) {\Large{$T$}}
     \put (-207,45) {\Large{$T$}}
    \caption{On the left : the 2-tangle $T$. On the right : the solid torus closure $\overline{T}$ of the 2-tangle $T$.}
  \label{closures1}}
\end{figure}

\begin{definition}
Let $L$ be a link in the solid torus. We say that $L$ is a \emph{rational link in the solid torus} if there exists a rational tangle $T$ such that $\overline{T}=L$.
\end{definition}
For example, Figure \ref{closure111} shows the closure of the rational tangle
$\left[ 3\text{ }2\text{ }-3\right] $.

\begin{figure}[H]
  \centering
   {\includegraphics[scale=0.1]{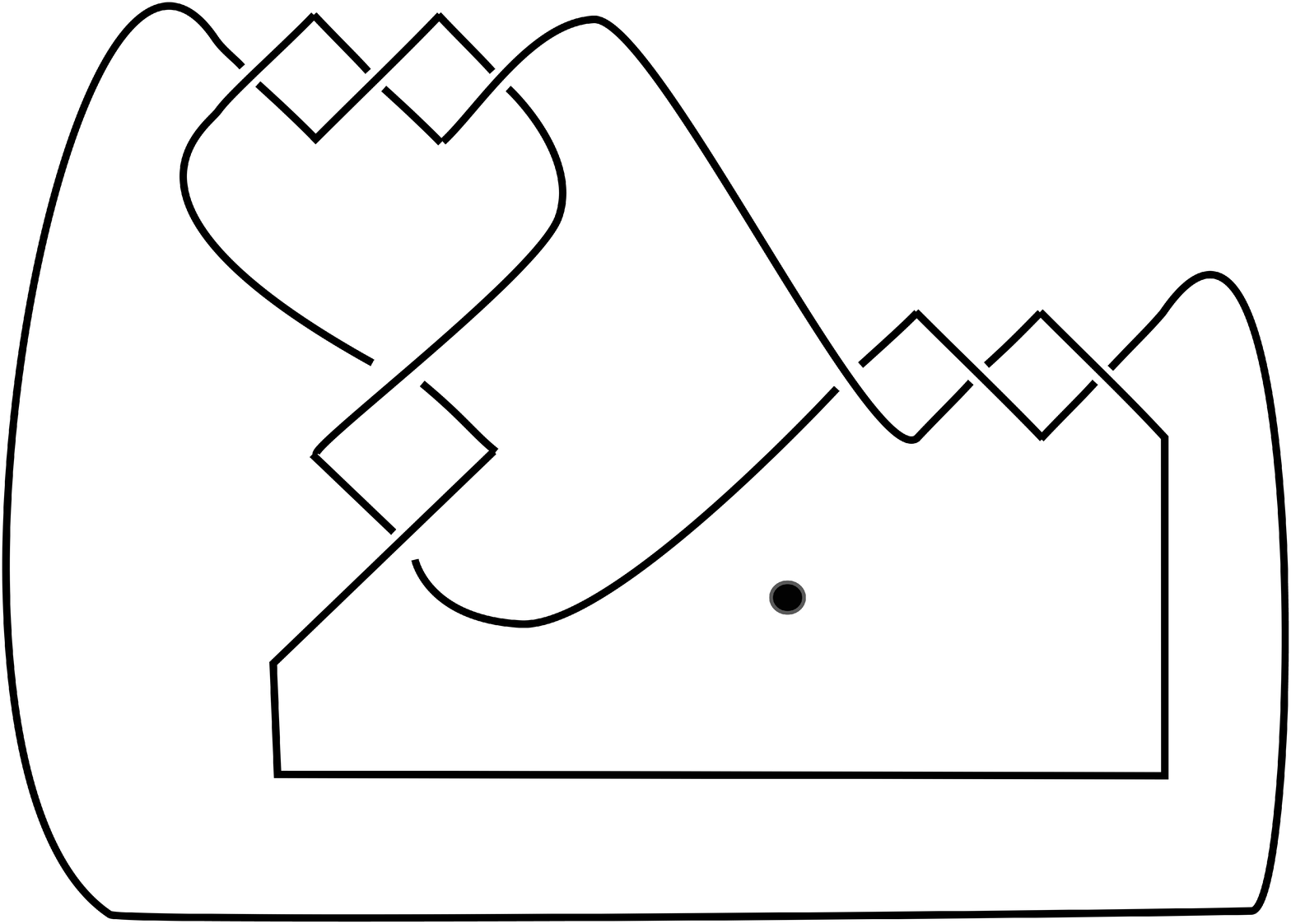}
    \caption{The closure of the rational tangle $\left[ 3\text{ }2\text{ }-3\right] $. }
  \label{closure111}}
\end{figure}

 The following theorem establishes the relationship between the isotopy of a rational link in the solid torus and the isotopy of the rational tangle.
\begin{theorem}
\label{main theorem}
Let $T_{1}$ and $T_{2}$ be two rational tangles, then $\overline{T_{1}}\simeq \overline{T_{2}}$ if and only if $\
T_{1}\simeq T_{2}$.
\end{theorem}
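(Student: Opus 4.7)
The easy direction ($T_1\simeq T_2\Rightarrow\overline{T_1}\simeq\overline{T_2}$) is immediate: an ambient isotopy of tangles fixes the boundary of the ambient $3$-ball, so it extends by the identity on the complement to an ambient isotopy of the closures inside the solid torus.

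For the converse, I would translate the hypothesis into a computation in the skein module $\mathcal{S}(S^1\times I)=\mathbb{Z}[A,A^{-1}][z]$ and then invoke the tools already developed in Section \ref{rational tangles section}. Writing $\langle T_i\rangle = \alpha_i(A)\langle[\infty]\rangle + \beta_i(A)\langle[0]\rangle$ in $TL_2$ and applying the $\mathbb{Z}[A,A^{-1}]$-linear map $\mathcal{STC}_2 : TL_2\to\mathcal{S}(S^1\times I)$ of equation (\ref{src}), we get
\[
\overline{T_i} \;=\; \alpha_i(A)\,\overline{[\infty]} + \beta_i(A)\,\overline{[0]},\qquad i=1,2.
\]
The central technical step is computing $\overline{[\infty]}$ and $\overline{[0]}$ in $\mathbb{Z}[A,A^{-1}][z]$ and verifying their linear independence. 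A direct inspection of the wiring shows that the vertical strands of $[\infty]$ close up to two disjoint parallel copies of the core circle, so $\overline{[\infty]} = z^{2}$; on the other hand the horizontal strands of $[0]$ close up to a single unknotted loop (the two outer wiring arcs are traversed in opposite orientations and their windings cancel), so $\overline{[0]} = -(A^{2}+A^{-2})$. These lie in different $z$-degrees, hence are linearly independent over $\mathbb{Z}[A,A^{-1}]$.

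Now $\overline{T_1}\simeq\overline{T_2}$ is ambient isotopy of unframed links, whereas $\mathcal{S}(S^1\times I)$ records framed links up to regular isotopy; the discrepancy is the standard Reidemeister I writhe correction, so $\overline{T_1}=(-A)^{3k}\,\overline{T_2}$ in $\mathcal{S}(S^1\times I)$ for some $k\in\mathbb{Z}$. Matching coefficients against the independent vectors $\overline{[\infty]}$ and $\overline{[0]}$ yields $\alpha_1=(-A)^{3k}\alpha_2$ and $\beta_1=(-A)^{3k}\beta_2$; the writhe factor cancels in the ratio, and therefore $R_{T_1}(A)=R_{T_2}(A)$. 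Specializing at $A=\sqrt{i}$ gives $C(T_1)=C(T_2)$, hence $F(T_1)=F(T_2)$ via the identity $C(T)=F(T)$, and Conway's classification theorem forces $T_1\simeq T_2$. The main obstacle in this outline is the concrete computation of $\overline{[\infty]}$ and $\overline{[0]}$; once those are pinned down, everything afterward is purely formal.
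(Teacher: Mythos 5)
Your proof is correct and follows essentially the same route as the paper: expand $\left\langle T\right\rangle$ in the basis $\{[\infty],[0]\}$ of $TL_2$, push through the wiring map $\mathcal{STC}_2$ into $\mathcal{S}(S^1\times I)$, observe that the two closures land on linearly independent elements (a power of $z$ and a scalar multiple of the empty diagram), so that the ratio $\alpha/\beta$ is recoverable from $\overline{T}$, is insensitive to the Reidemeister I factor, and equals $F(T)$ after specializing at $A=\sqrt{i}$, whence Conway's classification finishes the argument. The only cosmetic difference is that you make explicit the computation of $\overline{[\infty]}$ and $\overline{[0]}$ and the writhe normalization $(-A)^{3k}$, which the paper compresses into the assertion that $\alpha$ and $\beta$ are regular isotopy invariants whose ratio is an ambient isotopy invariant.
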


\begin{proof}
Let $T_{1}\simeq T_{2}$.\ Note that the sequence of Reidemeister
moves that takes $T_{1}$ into $T_{2}$ takes place inside the rectangles
defining the tangles $T_{1}$ and $T_{2}$ and hence inside the corresponding
knot (link) diagrams. Therefore, by construction of $\overline{T_{1}}$ and $%
\overline{T_{2}}$, the same sequence of moves takes $\overline{T_{1}}$ into $%
\overline{T_{2}}.$ Hence $\overline{T_{1}}\simeq \overline{T_{2}}.$

In order to prove the converse we first show the relation between rational knots in the solid torus and rational tangles using the wiring map \ref{src}.  Let $\overline{T}$ be a rational
knot in the solid torus such that%
\begin{equation*}
\left\langle T\right\rangle =\alpha (A)[ \infty] +\beta (A)[ 0]  ,
\end{equation*}
On the other hand, using the map \ref{src},

\begin{eqnarray*}
\mathcal{STC}_2(T)&=&\left\langle \overline{T}\right\rangle = \alpha (A) \mathcal{STC}_2([\infty]) +\beta (A) \mathcal{STC}_2( [0] )\\&=& \alpha (A) \delta  +\beta (A) z^2.
\end{eqnarray*}

Define
\begin{equation*}
R_{\overline{T}}(A)=\frac{\alpha \left( A\right) }{\beta \left( A\right) },%
\text{ and }C(\overline{T})=-i\frac{\alpha \left( \sqrt{i}\right) }{\beta
\left( \sqrt{i}\right) }.
\end{equation*}

Note that $\alpha (A)$ and $\beta (A)$ are regular isotopy invariants of
knots in the solid torus. Moreover, as the bracket changes under a type-I
move by multiplying by either $-A^{3}$ or $-A^{-3},$ it is obvious that $%
\alpha (A)$ and $\beta (A)$ both are multiplied by the same factor, and
hence the ratio $R_{\overline{T}}(A)$ does not change under the first
Reidemeister move. Therefore $R_{\overline{T}}(A)$ and $C(\overline{T})$ are
ambient isotopy invariants for rational knots in the solid torus.

By definition of $C(\overline{T})$, we have%
\begin{equation*}
C(\overline{T})=C(T).
\end{equation*}

Since $C(T)=F(T)$, we have%
\begin{equation*}
C(\overline{T})=F(T).
\end{equation*}

Now let $\overline{T_{1}}\simeq \overline{T_{2}}.$ Since $C(\overline{T})$
is an ambient isotopy invariant for rational knots in the solid torus, we
have 
\begin{equation*}
C(\overline{T_{1}})=C(\overline{T_{2}}),
\end{equation*}%
therefore%
\begin{equation*}
F(T_{1})=F(T_{2}),
\end{equation*}%
and hence%
\begin{equation*}
T_{1}\simeq T_{2}.
\end{equation*}
\end{proof}
 The previous theorem has some immediate consequences. We list some of them next. Now we can give the following definition.
\begin{definition}
Let $K$ be a rational knot in the solid torus. Let $T$ be its corresponding tangle.  The fraction of the knot $K$ is defined to be $
F(K):=F(T).
$
\end{definition}
The following Corollary is immediate.

\begin{corollary}
Two rational knots in the solid torus are isotopy equivalent if and only if
they have the same  fraction.
\end{corollary}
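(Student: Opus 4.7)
The plan is to derive this corollary as a direct consequence of Theorem \ref{main theorem} chained with Conway's classification theorem for rational tangles stated earlier in the paper. Because the notion of a rational knot in the solid torus is defined precisely as a solid torus closure $\overline{T}$ of a rational tangle $T$, and because $F(K)$ is defined via the fraction $F(T)$ of such a tangle, the argument is essentially a verification that all the equivalences line up.

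First, I would check that $F(K)$ is well-defined. If $K = \overline{T_1} = \overline{T_2}$ for two rational tangles $T_1, T_2$, then in particular $\overline{T_1} \simeq \overline{T_2}$, so Theorem \ref{main theorem} gives $T_1 \simeq T_2$, and Conway's classification theorem then yields $F(T_1) = F(T_2)$. Hence the definition $F(K) := F(T)$ does not depend on the choice of rational tangle representative.

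Next, I would prove the biconditional itself. Suppose $K_1 = \overline{T_1}$ and $K_2 = \overline{T_2}$. Then
\begin{equation*}
K_1 \simeq K_2 \iff \overline{T_1} \simeq \overline{T_2} \iff T_1 \simeq T_2 \iff F(T_1) = F(T_2) \iff F(K_1) = F(K_2),
\end{equation*}
where the first equivalence is the definition of rational knots in the solid torus, the second is Theorem \ref{main theorem}, the third is Conway's classification of rational tangles, and the fourth is the definition of $F(K)$.

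The only real content here is the equivalence supplied by Theorem \ref{main theorem}; all other steps are bookkeeping. There is no substantial obstacle, since the heavy lifting (constructing the ambient isotopy invariant $C(\overline{T}) = F(T)$ of a rational knot in the solid torus) has already been done in the proof of Theorem \ref{main theorem}.
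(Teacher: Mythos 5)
Your proposal is correct and is exactly the argument the paper intends when it calls the corollary ``immediate'': chain Theorem \ref{main theorem} with Conway's classification of rational tangles and unwind the definition of $F(K)$. The well-definedness check for $F(K)$ is a worthwhile addition that the paper leaves implicit.
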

In \cite{KL1} the authors show that two local moves are sufficient to generate the isotopy of rational tangles. These moves are called horizontal flypes and vertical flypes. They are basically a horizontal subtangle half twist and a vertical subtangle half twist. We just mention here that the isotopy of the closure arcs of a rational knot or link in the solid torus is represented by horizontal flypes, which amount to Reidemeister II moves.  

\begin{example}
The two rational links in the solid torus with rational tangles $%
\left[ -2\text{ }3\text{ }2\right] ,$ and $\left[ 3\text{ }-2\text{ }3\right]
$ are isotopy equivalent as their rational tangles have the same 
fraction $\dfrac{12}{5}$ (See Figure \ref{closure}).

\begin{figure}[H]
  \centering
   {\includegraphics[scale=0.1]{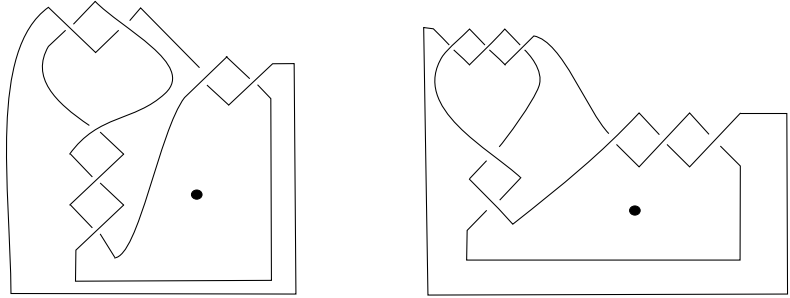}
    \caption{Two isotopy equivalent rational links in the solid torus.}
  \label{closure}}
\end{figure}

\end{example}

Recall that a rational tangle $T=[a_1,....a_n]$ is in a canonical  form  if $T$ is alternating and $n$ is odd. A result by Kauffman and Lambropoulou \cite{MR1953344} states that every rational tangle is equivalent to a rational tangle in a canonical form. We have the following definition.
\begin{definition}
A rational knot in the solid torus is said to be in a canonical form if it is the closure of a rational tangle in a canonical form .
\end{definition}
The following result follows from Lemma $3.5$ \cite{MR1953344}.
\begin{corollary}
Every rational knot or link in the solid torus can be isotoped to a canonical form.
\end{corollary}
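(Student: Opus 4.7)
The plan is to deduce the corollary as a direct consequence of Theorem~\ref{main theorem} combined with the Kauffman--Lambropoulou result cited immediately before the statement. The argument has essentially no new content beyond packaging existing tools, so I would keep it short.

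First, I would unwind the definitions. Let $L$ be a rational knot or link in the solid torus. By the definition of rational link in the solid torus, there exists a rational tangle $T$ with $\overline{T} = L$. By Lemma~3.5 of \cite{MR1953344} (the statement invoked just before the corollary), there exists a rational tangle $T'$ in canonical form, meaning $T' = [a_1, a_2, \dots, a_n]$ is alternating with $n$ odd, such that $T \simeq T'$ as tangles.

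Next, I would apply Theorem~\ref{main theorem}. The easy (forward) direction of that theorem says that if $T \simeq T'$ then $\overline{T} \simeq \overline{T'}$; this is exactly the first paragraph of the proof of Theorem~\ref{main theorem}, where one observes that the sequence of Reidemeister moves realizing the tangle isotopy takes place inside the rectangle defining the tangle and therefore lifts verbatim to an isotopy of the closures inside the solid torus. Hence $L = \overline{T} \simeq \overline{T'}$, and $\overline{T'}$ is by definition a rational link in the solid torus in canonical form, which completes the proof.

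There is no real obstacle here: the entire content lies in Lemma~3.5 of \cite{MR1953344} (which supplies the canonical form at the tangle level) and in the tangle-to-solid-torus transfer already proved in Theorem~\ref{main theorem}. The only mild point to flag is the direction of the implication being used: we only need the easy direction $T_1 \simeq T_2 \Rightarrow \overline{T_1} \simeq \overline{T_2}$, which does not require the invariants $R_{\overline{T}}$ or $C(\overline{T})$ introduced to prove the converse. So the corollary could in principle be established without the full strength of Theorem~\ref{main theorem}, but citing the theorem directly is the cleanest presentation.
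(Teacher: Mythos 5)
Your argument is correct and is exactly the route the paper intends: the paper offers no written proof, simply asserting that the corollary follows from Lemma~3.5 of \cite{MR1953344}, and your two steps (canonical form at the tangle level, then the easy direction of Theorem~\ref{main theorem} to transfer the isotopy to the solid torus closure) fill in precisely the implicit details. Your remark that only the forward direction of Theorem~\ref{main theorem} is needed is also accurate.
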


The following definition is due to Kauffman and Lambropoulou \cite{dna}.
\begin{definition}
An unoriented rational tangle has connectivity type $[0]$ if the NW end arc is
connected to the NE end arc and the SW end arc is connected to the SE end
arc. These are the same connections as in the tangle $[0]$. Similarly, the
tangle has connectivity type [$\infty $] or $[1]$ if the end arc connections are the same as in the tangles [$\infty $] and $[+1]$ 
respectively.
\end{definition}

The basic connectivity patterns of rational tangles are denoted by the
tangles [0], [$\infty $] and [+1], and they can be represented iconically by%
\begin{eqnarray*}
\lbrack 0] &=&\asymp \\
\lbrack \infty ] &=&>< \\
\lbrack 1] &=&\lbrack -1]=\chi
\end{eqnarray*}

 The \textit{parity} \cite{dna} of a reduced fraction $p/q$ is defined to be the ratio
of the parities $(e$ $or$ $o)$ of its numerator and denominator $p$ and $q$,
where $e$ stands for even and $o$ for odd. Moreover, the tangle $[0]$ has
fraction $0 = 0/1$, thus parity $e/o$. The tangle [$\infty $] has formal
fraction $\infty  = 1/0$, thus parity $o/e$. The tangle $[+1]$ has fraction $1 =
1/1$, thus parity $o/o$, and the tangle $[-1]$ has fraction $-1 = -1/1$, thus
parity $o/o$. The connectivity type of a tangle is completely characterized by the parity of the tangle as the following theorem states. See \cite{dna}.

\begin{theorem}
\label{dna them}
A rational tangle $T$ has connectivity type $\asymp $ if and only if its fraction has parity $e/o$. The tangle $T$ has connectivity type $><$ 
if and only if its fraction has parity $o/e$. Finally, $T$ has connectivity type $\chi $ if and only if its fraction has parity $o/o$.
\end{theorem}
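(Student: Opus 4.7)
The plan is to induct on the length $m$ of a continued-fraction presentation $T=[a_1 a_2 \cdots a_m]$, exploiting the construction of rational tangles from integer tangles via the two operations $T\mapsto T+[n]$ and $T\mapsto 1/T$. Since Conway's theorem ensures that $F(T)$ (reduced to lowest terms) is an invariant of the isotopy class of $T$, and the connectivity type is evidently an isotopy invariant, it suffices to verify that the correspondence parity $\leftrightarrow$ connectivity holds on base cases and is preserved by both operations. The three parities $e/o$, $o/e$, $o/o$ (the only possibilities for a fraction $p/q$ in lowest terms) and the three connectivity types $\asymp$, $><$, $\chi$ each partition all rational tangles into three classes, so establishing the correspondence in one direction automatically yields the biconditional.

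For the base cases I would directly inspect the integer tangles: $[0]$ has fraction $0/1$ (parity $e/o$) and connectivity $\asymp$; $[\infty]$ has fraction $1/0$ (parity $o/e$) and connectivity $><$; $[n]$ for $n$ even and nonzero is of type $\asymp$ with fraction $n/1$ (parity $e/o$); and $[n]$ for $n$ odd is of type $\chi$ with fraction $n/1$ (parity $o/o$). These exhaust the base situations.

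For the inductive step I would run a case analysis on the connectivity type of $T$ and trace the arcs after juxtaposing $[n]$ on the right, using that $[n]$ itself has connectivity $\asymp$ (for $n$ even) or $\chi$ (for $n$ odd). The outcomes are: if $T$ is of type $\asymp$ (so $q$ odd), then $T+[n]$ is of type $\asymp$ when $n$ is even and of type $\chi$ when $n$ is odd, mirroring the parity of $p+nq$; if $T$ is of type $><$ (so $q$ even and $p$ odd), then the right endpoints of $T$ are already paired within $T$, so $T+[n]$ remains of type $><$ and $p+nq$ remains odd regardless of $n$; if $T$ is of type $\chi$, the pattern swaps with that of the $\asymp$ case. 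For inversion, $F(1/T)=1/F(T)=q/p$, which swaps parities $e/o\leftrightarrow o/e$ while fixing $o/o$; geometrically, the flip through the NW--SE diagonal exchanges $\asymp$ with $><$ and fixes $\chi$, matching the parity behavior.

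The main obstacle is the bookkeeping in the addition step: one must carefully follow the identifications of the two right endpoints of $T$ with the two left endpoints of $[n]$, taking particular care with the $><$ case where the two glued endpoints of $T$ lie on the same arc. A minor side-check is that reduction is preserved, i.e.\ $\gcd(p+nq,q)=\gcd(p,q)=1$ and $\gcd(q,p)=\gcd(p,q)=1$, so the parity classification applies at each inductive step. Once these cases are settled, induction on the length $m$ of the continued fraction completes the proof.
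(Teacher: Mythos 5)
The paper does not actually prove this statement: it is quoted from Kauffman--Lambropoulou \cite{dna} with a citation in place of an argument, so there is nothing internal to compare against. Your proposal is a correct, self-contained proof, and it follows what is essentially the standard route (also the one used in the cited source): reduce the biconditionals to one implication per class by observing that parity and connectivity each trichotomize rational tangles, verify the base cases $[0]$, $[\infty]$, $[n]$, and check that both the parity of $p/q$ and the connectivity type transform consistently under the two generating operations $T\mapsto T+[n]$ and $T\mapsto 1/T$. Your case analysis is accurate in the one genuinely delicate spot, the $><$ case of addition, where the two glued endpoints of $T$ lie on a single arc so that $T+[n]$ stays of type $><$ for every $n$, matching the fact that $p+nq$ stays odd when $q$ is even; and the side-checks you flag (that $(p+nq)/q$ and $q/p$ remain in lowest terms, and that the formal fraction $1/0$ is handled consistently under inversion) are exactly the right ones to record. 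The only presentational caveat is that the paper builds $[a_1\,a_2\cdots a_m]$ by adding the integer tangle on the left, $[a_m]+\tfrac{1}{S}$, while you add on the right; since both the fraction and the connectivity analysis are symmetric under this change, nothing is affected, but it is worth a sentence in a written-up version.
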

The previous theorem can be used to characterize the homotopy type of rational knots and links in the solid torus. Since connectivity does not change by Reidemeister moves and crossing changes we have the following.  
\begin{theorem}
\label{new them}
A rational link in the solid torus $\overline{R}$ is either
\begin{enumerate}

\item A two component link of homotopy type \begin{minipage}[h]{0.05\linewidth}
		\vspace{0pt}
		\scalebox{0.1}{\includegraphics{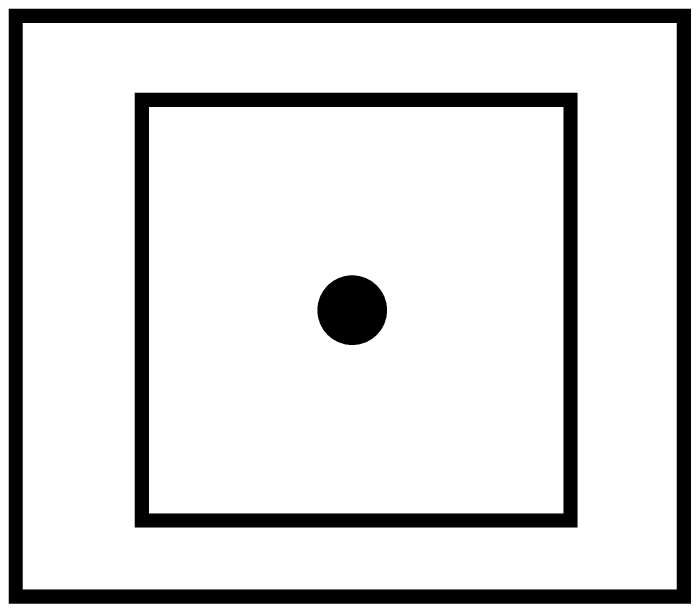}}\end{minipage}

 or

\item A knot of homotopy type \begin{minipage}[h]{0.05\linewidth}
		\vspace{0pt}
		\scalebox{0.1}{\includegraphics{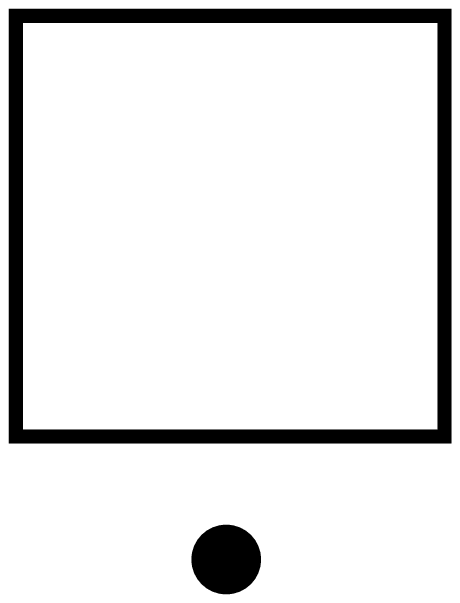}}\end{minipage}

or

\item A knot of homotopy type \begin{minipage}[h]{0.05\linewidth}
		\vspace{0pt}
		\scalebox{0.1}{\includegraphics{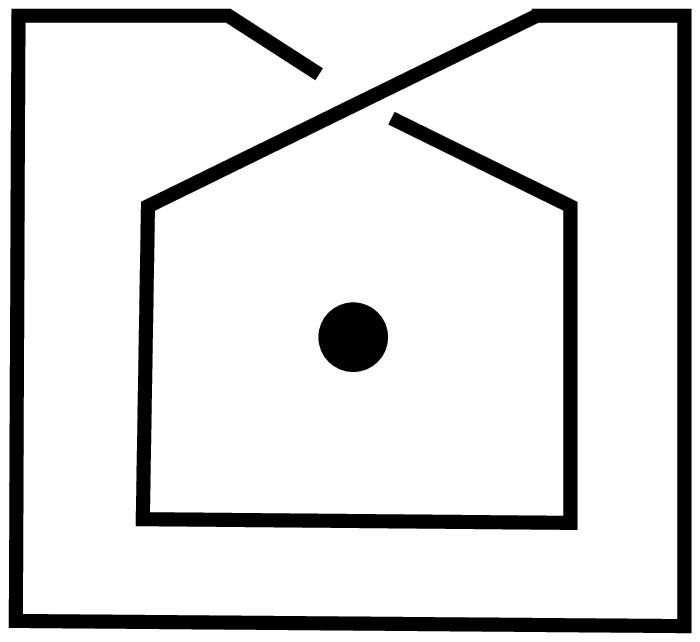}}\end{minipage}
\end{enumerate}

\end{theorem}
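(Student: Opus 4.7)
The plan is to reduce the classification up to homotopy in the solid torus to the three connectivity classes guaranteed by Theorem \ref{dna them}. Two links in a $3$-manifold are homotopic if and only if they differ by a sequence of ambient isotopies and crossing changes; neither of these operations alters how the four boundary points of a $2$-tangle are joined inside the tangle box, since each is supported locally away from the boundary of the tangle. Consequently, the connectivity type of the rational tangle $R$ is an invariant of the homotopy class of $\overline{R}$, and by Theorem \ref{dna them} this connectivity type takes exactly one of the three values $\asymp$, $><$, $\chi$.

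Next I would argue that any rational tangle is homotopic rel boundary to a model representative of its connectivity type. Since the $3$-ball is simply connected, any two $2$-tangles with the same connectivity pattern are freely homotopic rel boundary; any free circles contribute nothing, as they can be contracted to points. Thus $R$ is homotopic rel boundary to the model tangle $[0]$, $[\infty]$, or $[\pm 1]$ according to its connectivity type, and extending the homotopy by the fixed closure arcs, $\overline{R}$ is homotopic in the solid torus to $\overline{[0]}$, $\overline{[\infty]}$, or $\overline{[\pm 1]}$.

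A direct inspection of the closure operation depicted in Figure \ref{closures1} then identifies each of these three model closures with the corresponding picture in the theorem statement: $\overline{[0]}$ is a single unknotted circle bounding a disc in the solid torus, of winding number zero about the core; $\overline{[\infty]}$ is a two-component link consisting of two parallel copies of the core, each of winding number $\pm 1$; and $\overline{[\pm 1]}$ is a single circle wrapping twice around the core. Since these three classes are pairwise distinguished by their number of components together with the absolute winding number around the core, the trichotomy is exhaustive and the three cases are distinct.

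The step I expect to require the most care is this last one, the identification of the three model closures with the three pictorial homotopy types, since it requires carefully tracking the closure arcs through the annulus and verifying that the winding numbers agree with those implicit in the pictures. This is essentially a visual check, but it is the key calculation converting the algebraic input (parity of the fraction, via Theorem \ref{dna them}) into the topological output (pictorial homotopy class in the solid torus).
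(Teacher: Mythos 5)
Your argument is correct and follows the same route the paper takes: the paper's entire justification for Theorem \ref{new them} is the single sentence preceding it (``connectivity does not change by Reidemeister moves and crossing changes''), so what you have really done is supply the two steps it leaves implicit --- first, that two tangles with the same connectivity pattern are homotopic rel boundary because the ball is simply connected (this is what makes the list \emph{exhaustive}, rather than merely showing the three classes are distinguishable), and second, the explicit computation of the three model closures. One caveat on that last step: your assignment of $\overline{[0]}$ to the null-homotopic knot and $\overline{[\infty]}$ to the two-component link is the reverse of what the paper's own skein computation in the proof of Theorem \ref{main theorem} gives, namely $\mathcal{STC}_2([0]) = z^2$ (two parallel copies of the core) and $\mathcal{STC}_2([\infty]) = \delta$ (a null-homotopic circle). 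Which assignment is correct depends on whether the closure arcs in Figure \ref{closures1} join NW to NE and SW to SE or instead NW to SW and NE to SE, and the paper is not internally consistent on this point: the Corollary that follows Theorem \ref{new them} pairs parity $o/e$ (connectivity type $[\infty]$) with the two-component link, which agrees with your convention and contradicts the displayed skein computation. This discrepancy does not affect the trichotomy itself, which is all Theorem \ref{new them} asserts, but you should pin down the closure convention before relying on the identification in the subsequent Corollary.
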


Theorems \ref{dna them} and \ref{new them} imply immediately the following.

\begin{corollary}
A rational link in the solid torus $\overline{R}$ is
\begin{enumerate}

\item  A two component link of homotopy type \begin{minipage}[h]{0.05\linewidth}
		\vspace{0pt}
		\scalebox{0.1}{\includegraphics{zz_ST}}\end{minipage} if and only if the fraction $F(R)$ has parity $o/e$.

\item  A knot of homotopy type \begin{minipage}[h]{0.05\linewidth}
		\vspace{0pt}
		\scalebox{0.1}{\includegraphics{trivial_ST}}\end{minipage} if and only the fraction $F(R)$ has parity $e/o$.

 \item  A knot of homotopy type \begin{minipage}[h]{0.05\linewidth}
		\vspace{0pt}
		\scalebox{0.1}{\includegraphics{crossing_ST}}\end{minipage} if and only the fraction $F(R)$ has parity $o/o$.

\end{enumerate}
\end{corollary}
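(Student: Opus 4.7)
The plan is to chain Theorem \ref{dna them} with Theorem \ref{new them} via a case analysis on the connectivity type of the underlying tangle $R$. Theorem \ref{dna them} already matches the parity of $F(R)$ to one of the three connectivity types $\asymp$, $><$, $\chi$, and Theorem \ref{new them} already lists the three possible homotopy classes of $\overline{R}$ in the solid torus. All that remains is to verify which connectivity type produces which homotopy class.

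First I would fix the closure convention from Figure \ref{closures1}: two arcs lying outside the tangle rectangle, joining NW to SW and NE to SE respectively, each running once through the complementary solid torus. With this convention, the free homotopy class of $\overline{R}$ in the solid torus depends only on how the four endpoints of $R$ are paired inside the tangle, i.e.\ on the connectivity type of $R$, since the crossings inside the rectangle are irrelevant up to homotopy. I would then trace the resulting loops in each case. For type $\asymp$, the closure is a single loop $\mathrm{NW}\to\mathrm{NE}\to\mathrm{SE}\to\mathrm{SW}\to\mathrm{NW}$ that traverses the two closure arcs with opposite orientations, hence is null-homotopic, giving the homotopy picture (trivial\_ST). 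For type $><$, the closure splits into two parallel loops, each built from one vertical tangle arc and one closure arc and each wrapping once around the solid torus, giving the two-component link (zz\_ST). For type $\chi$, the closure is a single loop that traverses both closure arcs in the same sense and therefore wraps once around the solid torus, giving (crossing\_ST).

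Combining these three identifications with the parity dictionary from Theorem \ref{dna them}, namely $e/o\leftrightarrow\asymp$, $o/e\leftrightarrow ><$, and $o/o\leftrightarrow\chi$, delivers the three equivalences in the corollary. There is no substantial obstacle; the only point requiring care is picking a consistent orientation convention on the two closure arcs so that the winding-number bookkeeping in the three cases is unambiguous. After that the argument is essentially a $3\times 3$ lookup between the two cited theorems.
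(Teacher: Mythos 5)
Your overall route is exactly the one the paper intends: the corollary is obtained by composing the parity-to-connectivity dictionary of Theorem \ref{dna them} with the list of possible homotopy types in Theorem \ref{new them}, so the only content is the lookup identifying which connectivity type closes to which homotopy class, together with the observation that a rational tangle has no closed components, so the component count is also determined by the connectivity type. Your verification of the first two cases is correct: with the closure arcs joining NW to SW and NE to SE around the complementary solid torus, a type-$\asymp$ tangle closes to a single null-homotopic knot (the two closure arcs are traversed with opposite orientations), and a type-$><$ tangle closes to two parallel essential circles, each winding once.

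Your third case, however, contains a slip that is internally inconsistent with your second case. For the $><$ case to produce two curves each wrapping once, each closure arc (completed by a short arc inside the contractible tangle box) must contribute a full turn around the solid torus. In the $\chi$ case the single resulting loop traverses \emph{both} closure arcs in the same sense, so its winding number is $\pm 2$, not $\pm 1$ as you assert; a rational knot (one component) in the solid torus always has winding number $0$ or $\pm 2$, never $\pm 1$, since the two closure arcs contribute $\pm 1$ each. This winding number $2$ is precisely why a diagram of this homotopy class in the annulus must contain a crossing, which is what the third picture in Theorem \ref{new them} depicts. Your identification of type $\chi$ with that third picture is still forced (by elimination among the three possibilities), so the corollary is not endangered, but the justification ``wraps once around the solid torus'' should read ``wraps twice.''
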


\begin{remark}

One might suspect that the techniques provided in Theorem \ref{main theorem} can be extended for all $2$-tangles. Unfortunately, this is not the case. Here we provide a counter example.

\begin{figure}[h]
  \centering
   {\includegraphics[scale=0.09]{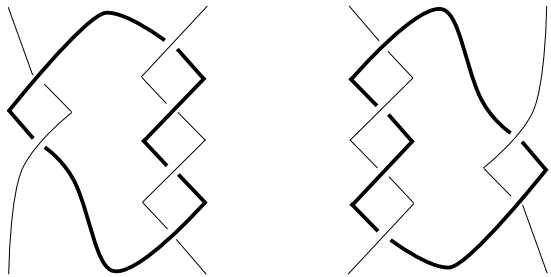}
    \caption{The tangle $T_1$ on the left and the tangle $T_2$ on the right.}
  \label{counter example}}
\end{figure}
Figure \ref{counter example} shows an example of two tangles $T_1$ and $T_2$ that are not isotopic. To show this we define a simple tangle invariant that is essentially similar to the classical linking number invariant of links in the 3-sphere. Let $T$ be a 2-tangle of the homotopy type given in Figure \ref{simple}.

\begin{figure}[h]
  \centering
   {\includegraphics[scale=0.15]{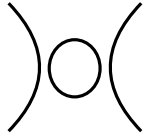}
    \caption{A simple 2-tangle.}
  \label{simple}}
\end{figure}
The invariant that we will define we will be defined on all 2-tangles $T$ that are homotopy equivalent to the 2-tangle in Figure \ref{simple}. We define the \textit{ left linking number} of $ T$ of the homotopy type in Figure \ref{simple} to be: one half of the absolute value of the sum of the signs between the left component and the component homotopic to the circle. Here by the left component we mean the component that connects NW to SW. This is clearly an invariant under Reidemeister moves (the proof of invariance is exactly the same as the proof in the case of the classical linking number on links in the 2-sphere). Note that the tangles $T_1$ and $T_2$ are both of the homotopy type given in Figure \ref{simple}.  Since the left linking number of $T_1$ is 1 and the left linking number of $T_2$ is $2$ we conclude that the tangles $T_1$ and $T_2$ are not isotopic.

On the other hand, the solid torus  closures of the tangles $T_1$ and $T_2$ are the links $\overline{T_1}$ and $\overline{T_2}$ shown in Figure \ref{14} are isotopic.

\begin{figure}[h]
  \centering
   {\includegraphics[scale=0.06]{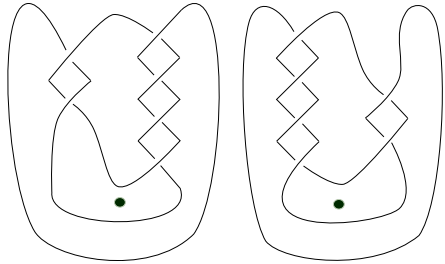}
    \caption{The two links $\overline{T_1}$ on the left and $\overline{T_2}$ on the right are isotopic.}
  \label{14}}
\end{figure}
 Indeed Figure \ref{15} shows the isotopy that relates the link $\overline{T_1}$ to the link $\overline{T_2}$.
\begin{figure}[h]
  \centering
   {\includegraphics[scale=0.05]{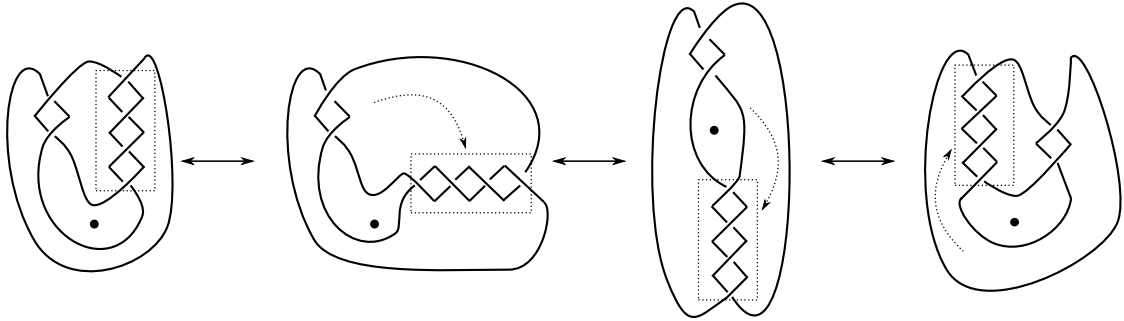}
    \caption{The link on the far left $\overline{T_1}$ is isotopic to the link of the far right $\overline{T_2}$ via the sequence of isotopies illustrated in the figure.}
  \label{15}}
\end{figure}
\end{remark}

In our definition of rational knots and links in the solid torus we did not include the classical rational knots and links that could be enclosed in a 3-ball embedded in the solid torus. These are exactly rational knots and links in the three space, and they were first classified in a theorem due to Schubert \cite{Ro}. Recall here that a numerator closure for a  rational tangle in $\mathbb{R}^3$ is the link in $\mathbb{R}^3$ obtained by joining the north endpoints together and the south endpoints also together.
\begin{theorem}(Schubert \cite{Ro})
Suppose that two rational tangles with fractions $(p/q)$ and $(p'/q')$ are given ($p$ and $q$ are relatively prime. Similarly for $p'$ and $q'$.) If $K(p/q)$ and $K(p'/q')$ denote the corresponding rational knots obtained by taking numerator closures of these tangles, then $K(p/q)$ and $K(p'/q')$ are isotopic if and only if $p=p'$ and either $q=q' \mod p$ or $qq'=1 \mod p$.
\end{theorem}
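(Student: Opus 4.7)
The plan is to prove Schubert's classification by passing to the double branched cover of $S^3$. The key identification, which is classical, is that the double cover of $S^3$ branched along the numerator closure $K(p/q)$ of a rational tangle with fraction $p/q$ is the lens space $L(p,q)$. Once this is in hand, Schubert's theorem reduces to Reidemeister's classification of lens spaces, together with explicit isotopies realizing the converse direction.

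First I would establish the branched-cover identification by induction on the construction of a rational tangle. The double cover of a $3$-ball branched over the two arcs of either trivial tangle $[0]$ or $[\infty]$ is a solid torus, distinguished on the boundary only by the slope of the lift of the meridian. Each elementary operation $T\mapsto T+[\pm 1]$ or $T\mapsto 1/T$ lifts to a self-homeomorphism of the boundary torus acting on peripheral slopes by the same $SL_2(\mathbb{Z})$ matrices that generate the continued-fraction expansion of $p/q$. Consequently the double cover of a rational tangle of fraction $p/q$ is a solid torus whose peripheral meridian has slope $p/q$, and gluing this to the double cover of the complementary ball (another solid torus) produces a closed $3$-manifold with a genus-one Heegaard splitting, which by definition is $L(p,q)$.

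Next I would invoke Reidemeister's theorem: $L(p,q)$ and $L(p',q')$ are orientation-preservingly homeomorphic if and only if $p=p'$ and $q'\equiv q^{\pm 1}\pmod p$. Since isotopic knots in $S^3$ yield oriented-homeomorphic double branched covers, an isotopy $K(p/q)\simeq K(p'/q')$ forces exactly the congruence $q\equiv q'\pmod p$ or $qq'\equiv 1\pmod p$, giving the necessary direction of Schubert's theorem. The converse is concrete at the level of tangles: replacing $q$ by $q\pm p$ is realized by a sequence of flypes applied before taking the numerator closure, while inverting $q$ modulo $p$ corresponds to exchanging numerator and denominator closures via a $90^\circ$ rotation of the tangle. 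Both operations preserve the ambient isotopy class of the resulting link in $S^3$.

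The main obstacle, and the technical heart of the argument, is the step from homeomorphism of branched covers back to isotopy of knots in $S^3$: a priori distinct knots could share the same branched double cover. The cleanest modern resolution invokes the equivariant uniqueness of the genus-one Heegaard splitting of a lens space, which recovers the covering involution, and hence the branch locus in $S^3$, up to isotopy. Alternatively, one can appeal to Schubert's original $2$-bridge normal-form argument inside $S^3$ itself, or to the Bonahon–Siebenmann analysis of the JSJ structure of $2$-bridge knot complements. Any of these routes closes the logical loop; the rest of the argument is the bookkeeping sketched above.
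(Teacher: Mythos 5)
The paper does not prove this theorem: it is quoted as a classical result of Schubert with a citation to Rolfsen, and no argument is given, so there is no internal proof to compare yours against. Taken on its own merits, your outline is the standard modern route and it is correctly structured: the inductive identification of the double branched cover of $(B^3,T)$ with a solid torus whose meridian has slope governed by the continued fraction (the Montesinos trick), the resulting identification of the double cover of $(S^3,K(p/q))$ with $L(p,q)$, Reidemeister's classification of lens spaces up to orientation-preserving homeomorphism giving $p=p'$ and $q'\equiv q^{\pm 1}\pmod p$, and the explicit tangle moves (bottom twists changing $q$ to $q\pm p$ without changing the numerator closure, and the palindrome/rotation symmetry realizing $q\mapsto q^{-1}\bmod p$) for the converse. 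You also correctly isolate the genuine difficulty, which many sketches of this argument gloss over: a homeomorphism of double branched covers does not a priori descend to an isotopy of branch loci, and closing that gap requires either the Hodgson--Rubinstein equivariant uniqueness of genus-one Heegaard splittings, Bonahon--Siebenmann, or Schubert's original $2$-bridge normal form. Be aware, though, that each of these is a substantial theorem in its own right, so what you have is an honest and correctly organized reduction to known deep results rather than a self-contained proof; if you intend to write it out, the inductive slope computation in the first step also needs care with orientation and framing conventions, since these are exactly where the distinction between $q'\equiv q$ and $q'\equiv -q \pmod p$ (i.e., chirality of the knot) can be silently lost.
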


\section{Generalizations}
\label{Generalizations}
In this section we assume that the reader is familiar with skein theory associated with quantum spin networks \cite{masbum} and their connections with Kauffman bracket skein modules. In particular we assume familiarity with the graphical definition of the Jones-Wenzl projector, its properties and its connection with trivalent graphs. For more details see  \cite{Lic92}.\\

The main ingredient of Theorem \ref{main theorem} is the fact that the map $STC_2:TL_2 \longrightarrow \mathcal{S}(S^1\times I)$ is an isomorphism on its image. This follows from the fact that both modules $TL_2$ and $\Ima STC_2$ are free modules of rank $2$. In this section we give a generalization for this map and for Theorem \ref{Kauffman Goldman}. 

Consider the restriction of the map (\ref{src}) $STC_{2n}:TL_{2n} \longrightarrow \mathcal{S}(S^1\times I)$ on $TL^n_4$. We will denote this restriction by $STC^{\prime}_{2n}$. We have the following theorem.
\begin{theorem}
\label{thm}
The map $STC^{\prime}_{2n}:TL_4^{n} \longrightarrow \mathcal{S}(S^1\times I) $ is a $\mathbb{Q}(A)$-module isomorphism on its image.
\end{theorem}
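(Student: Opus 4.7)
The plan is to exhibit an explicit $\mathbb{Q}(A)$-basis of $TL_4^n$ coming from the trivalent spin-network formalism, identify the image of each basis element under $STC'_{2n}$ as a colored loop in the annulus, and then read off injectivity from the linear independence of those colored loops.

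Since each of the four clusters in $TL_4^n$ already carries a Jones-Wenzl idempotent $f^{(n)}$, a natural $\mathbb{Q}(A)$-basis is given by the H-shaped trivalent networks $B_k$, where the two top $n$-clusters fuse at a trivalent vertex of type $(n,n,k)$ into a single $k$-colored strand, which then splits at a second $(n,n,k)$ vertex into the two bottom $n$-clusters. Admissibility of the triple $(n,n,k)$ forces $k\in\{0,2,\ldots,2n\}$, giving exactly $n+1$ basis elements, in agreement with the Clebsch-Gordan decomposition $V_n\otimes V_n\cong\bigoplus_{k=0}^{n}V_{2k}$.

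Next I would compute $STC'_{2n}(B_k)$. The wiring in $STC'_{2n}$ joins the two top $n$-clusters to the two bottom $n$-clusters by parallel arcs running through $S^1\times I$ outside the disc. Sliding each cluster's $f^{(n)}$ along its wiring strand and absorbing it into the adjacent trivalent vertex (via $f^{(n)}f^{(n)}=f^{(n)}$ and the fact that the trivalent vertex already contains $f^{(n)}$ on each leg), the closed diagram is represented in $S^1\times I$ by a graph with two $(n,n,k)$ trivalent vertices joined by the central $k$-edge and by two $n$-edges, each running once around the annulus. Using the fusion identity together with bubble removal in the planar regions where it applies, one obtains
\[
STC'_{2n}(B_k) \;=\; \lambda_k\, e_k,
\]
where $e_k$ denotes the $k$-colored annular loop and $\lambda_k\in\mathbb{Q}(A)$ is a nonzero rational function built from theta-net values $\theta(n,n,k)$ and quantum integers $\Delta_k$. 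Since the colored loops $\{e_k\}_{k\geq 0}$ are linearly independent in $\mathcal{S}(S^1\times I)$ (they are related to the monomial basis $\{z^k\}$ by renormalized Chebyshev polynomials with invertible leading coefficient), the images $\{STC'_{2n}(B_k)\}_{k=0,2,\ldots,2n}$ are linearly independent over $\mathbb{Q}(A)$; this forces $STC'_{2n}$ to be injective on $TL_4^n$, and hence a $\mathbb{Q}(A)$-module isomorphism onto its image.

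The main obstacle is the reduction above: the two annular $n$-edges do not cobound a disc in $S^1\times I$, so bubble removal does not apply directly. One must either open the annulus along a meridional cut and carefully track the resulting planar spin network (together with any free $\delta$-loops it produces), or invoke the Markov-trace interpretation of $STC'_{2n}$, which expresses the annular closure of the $V_k$-isotypic projector on $V_n\otimes V_n$ as a scalar multiple of $e_k$, the scalar being the quantum trace of $\mathrm{Id}_{V_k}$. The non-vanishing of $\lambda_k$ rests on the non-vanishing of $\theta(n,n,k)$ and $\Delta_k$, which is precisely why the theorem is stated over $\mathbb{Q}(A)$ rather than over $\mathbb{Z}[A,A^{-1}]$.
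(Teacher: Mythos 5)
Your proposal is correct and follows essentially the same route as the paper: the same $(n+1)$-element basis of H-shaped trivalent networks for $TL_4^n$, the same evaluation of the closure of each basis element as $\theta(n,n,2i)/\Delta_{2i}$ times the $2i$-colored core curve, and the same appeal to the linear independence of the colored core curves in $\mathcal{S}(S^1\times I)$ over $\mathbb{Q}(A)$. The one point worth flagging is that the ``obstacle'' in your last paragraph is not actually there: after closure the two $n$-colored edges are not disjoint annular loops but arcs sharing both endpoints at the two trivalent vertices, so their union is an embedded null-homotopic circle in the annulus (winding numbers $+1$ and $-1$ cancel) bounding a disc --- the band between the parallel closure arcs, capped off by the two wedges at the vertices --- and this disc is disjoint from the $2i$-edge, so bubble removal applies verbatim and your two proposed workarounds are unnecessary. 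If you nevertheless use the Markov-trace route, note that the annular closure of the $V_{2i}$-isotypic projector is the colored core curve itself, with the scalar $\theta(n,n,2i)/\Delta_{2i}$ arising from the normalization of the H-network rather than from the quantum trace of $\mathrm{Id}_{V_{2i}}$; the quantum dimension $\Delta_{2i}$ only appears when one closes up in the disc or sphere.
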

\begin{proof}
The fact that the module $TL^n_4$ is $(n+1)$-dimensional generated by the basis $\{B_{n,i}\}_{i=0}^n$ given in Figure \ref{basis111} can be found in \cite{Lickbook}.
      
\begin{figure}[h]
  \centering
   {\includegraphics[scale=0.33]{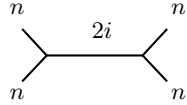}
    \put(-29,+17){\footnotesize{$2i$}}
        \put(-60,+26){\footnotesize{$n$}}
        \put(-60,-6){\footnotesize{$n$}}
        \put(1,-6){\footnotesize{$n$}}
        \put(1,+26){\footnotesize{$n$}}
    \caption{The basis element $B_{n,i}$ in module $\mathcal{D}^4_n$.}
  \label{basis111}}
\end{figure}
Now consider the image of the basis element $B_{n,i}$ under the map $STC^{\prime}_{2n}$:

\begin{figure}[H]
  \centering
   {\includegraphics[scale=0.5]{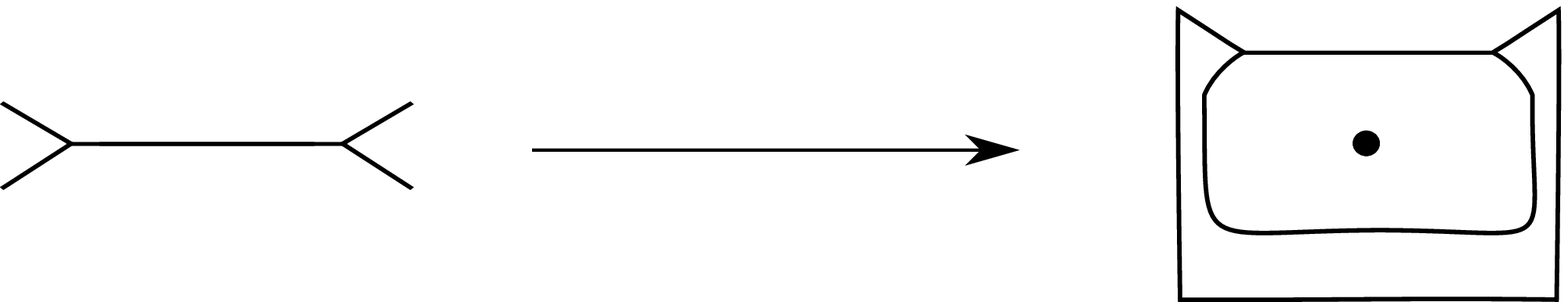}
    \put(-29,+37){\footnotesize{$2i$}}
        \put(-60,+26){\footnotesize{$n$}}
        \put(-60,-6){\footnotesize{$n$}}
         \put(-257,+32){\footnotesize{$2i$}}
        \put(-283,+36){\footnotesize{$n$}}
        \put(-283,13){\footnotesize{$n$}}
        \put(-223,+36){\footnotesize{$n$}}
        \put(-223,13){\footnotesize{$n$}}
        \put(-155,+35){\footnotesize{$STC^{\prime}_{2n}$}}
      \label{closures}}
\end{figure}
On the other hand, one has the identity:

\begin{eqnarray*}
   \begin{minipage}[h]{0.07\linewidth}
         \vspace{0pt}
         \scalebox{0.29}{\includegraphics{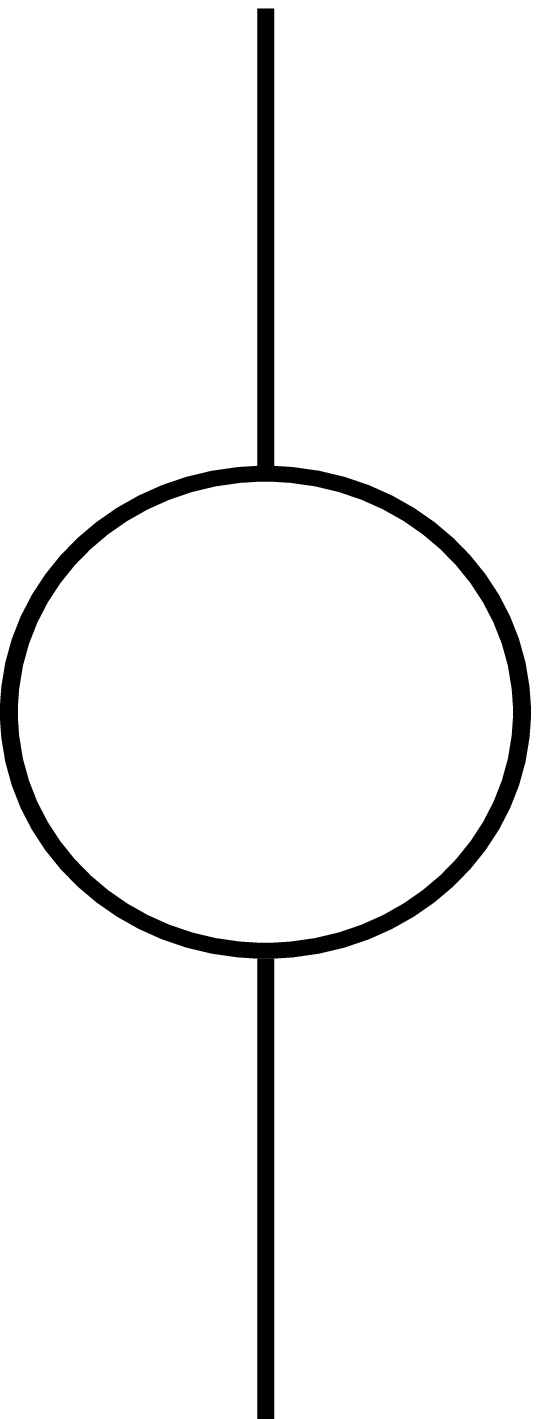}}
          \put(-35,100){$2i$}
          \put(-35,5){$2i$}
         \put(-41,60){$n$}
         \put(-11,60){$n$}
   \end{minipage}\quad&=& \frac{\theta(n,n,2i)}{\Delta_{2i}} \quad
   \begin{minipage}[h]{0.08\linewidth}
         \vspace{10pt}
         \scalebox{0.29}{\includegraphics{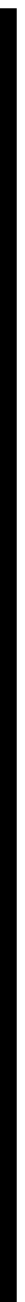}}
        \put(3,100){$2i$}         
   \end{minipage}
  \end{eqnarray*}
\vspace{10pt}  
The previous identity and the coefficients $\theta(n,n,2i)$ and $\Delta_{2i}$ are defined in \cite{masbum}. Hence,
\begin{eqnarray*}
STC^{\prime}_{2n}(B_{n,i})
   \quad= \quad
   \begin{minipage}[h]{0.08\linewidth}
         \vspace{-10pt}
         \scalebox{0.39}{\includegraphics{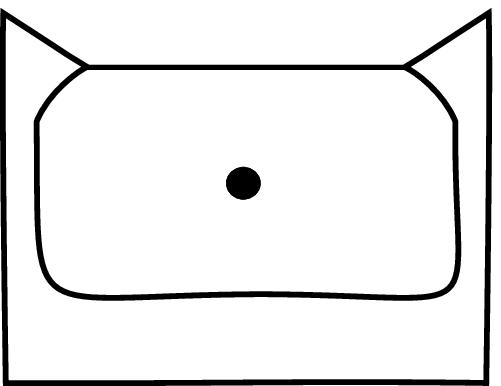}}
        \put(-30,40){$2i$}
        \put(-25,12){$n$}
        \put(-45,2){$n$}
   \end{minipage}\quad \quad= \frac{\theta(n,n,2i)}{\Delta_{2i}}\quad  \begin{minipage}[h]{0.08\linewidth}
         \vspace{-10pt}
         \scalebox{0.39}{\includegraphics{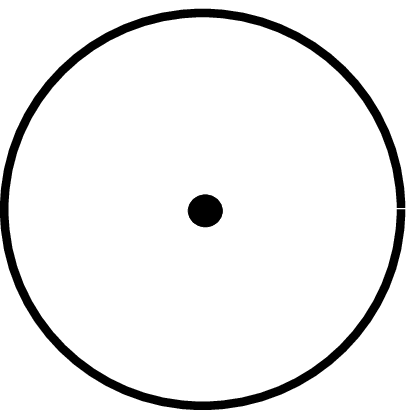}}
        \put(-30,50){$2i$}
   \end{minipage}
  \end{eqnarray*}
On the other hand the following holds:
\begin{eqnarray*}
  \begin{minipage}[h]{0.08\linewidth}
         \vspace{-10pt}
         \scalebox{0.39}{\includegraphics{Si}}
        \put(-30,50){$2i$}
   \end{minipage}\quad = S_{2i}
  \end{eqnarray*}
where $S_n$ is the $n^{th}$ Chebyshev polynomial \cite{Lickbook}. Since the ratio $\frac{\theta(n,n,2i)}{\Delta_{2i}}$ is not zero and $\{S_i\}_{i=0}^{\infty}$ is a basis for the module $\mathcal{S}(S^1\times I)$ \cite{Lickbook}, the result follows. 
\end{proof}

Let $T$ be a rational tangle. Color all connected components of $T$ by the $n^{th}$ Jones-Wenzl projector and denote this colored tangle by $T^n$.  We will call $T^n$ the $n^{th}$ colored tangle of $T$. Consider the expansion of $T^n$ in terms of the basis $B_{n,i}$ defined in Theorem \ref{thm} :
\begin{equation}
\label{general}
T^n= \sum_{i=0}^n \gamma_i(T)B_{n,i}
\end{equation}
We then have the following generalization of Theorem \ref{Kauffman Goldman}:

\begin{theorem}
\label{colored tangles}
Let $T$ be a rational tangle and $T^{n}$ be its $n^{th}$ colored tangle. Then for any choice of $A$ and for $0\leq i \leq n-1$  the ratios:
\begin{equation}
CR^i_T(A)=\frac{\gamma_i(T)}{\gamma_{n}(T)} 
\end{equation}
are ambient isotopy invariants of the tangle $T$, where the  $\gamma_i(T)$'s are the coefficients defined in equation \ref{general}. 
\end{theorem}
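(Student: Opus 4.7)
The plan is to adapt the Goldman--Kauffman argument (Theorem \ref{Kauffman Goldman}) from the $2$-dimensional algebra $TL_2$ to the $(n+1)$-dimensional algebra $TL_4^n$. First I would observe that since $\{B_{n,i}\}_{i=0}^n$ is a basis of $TL_4^n$ (as used in Theorem \ref{thm}), the coefficients $\gamma_i(T)$ in expansion (\ref{general}) are uniquely determined by the class of $T^n$ in the skein module. Consequently each $\gamma_i(T)$ is automatically a regular isotopy invariant of $T^n$: Reidemeister II and III moves on a diagram representative do not alter the skein element, and hence do not alter any coefficient of its basis expansion.

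Next I would analyze the effect of a Reidemeister I move applied to the uncolored tangle $T$. After $n$-cabling, such a move becomes an $n$-cabled curl attached to an adjacent Jones--Wenzl idempotent $f^{(n)}$. Using the idempotency and hook-killing relations (\ref{properties}) together with identity (\ref{thisthis}), this cabled curl collapses to $\mu_n^{\pm 1}$ times the trivial $n$-colored strand, where $\mu_n = (-1)^n A^{-n^2-2n}$. Therefore a Reidemeister I move on $T$ multiplies the entire element $T^n \in TL_4^n$ by the global scalar $\mu_n^{\pm 1}$. By uniqueness of the basis expansion, this scalar factors uniformly across the sum, so every coefficient $\gamma_i(T)$ is multiplied by the same $\mu_n^{\pm 1}$.

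With these two ingredients the conclusion is immediate. For any $0 \leq i \leq n-1$, the ratio
\[
CR^i_T(A) = \frac{\gamma_i(T)}{\gamma_n(T)}
\]
is preserved by Reidemeister II and III (each $\gamma_j$ is individually invariant) and preserved by Reidemeister I (numerator and denominator pick up the common factor $\mu_n^{\pm 1}$). Since these three moves generate ambient isotopy of framed diagrams, $CR^i_T(A)$ is an ambient isotopy invariant of $T$, as claimed. Note that rationality of $T$ plays no role in this argument; the hypothesis is only retained to keep Theorem \ref{colored tangles} in parallel with the rest of the paper.

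The principal technical point, and essentially the only step requiring genuine care, is the passage from an $n$-cabled Reidemeister I curl to the single scalar $\mu_n^{\pm 1}$. One must confirm that absorption into $f^{(n)}$ produces exactly $\mu_n^{\pm 1}$ times the straight colored strand, with no residual correction terms in the other basis directions that would spoil the uniform scaling of the $\gamma_i$'s. This is where the hook-killing property of $f^{(n)}$ is essential; once it is invoked, the rest of the argument is bookkeeping in the spirit of the proof of Theorem \ref{main theorem}.
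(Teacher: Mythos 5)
Your proposal is correct and follows essentially the same route as the paper's proof: invariance of the $\gamma_i$ under Reidemeister II and III from uniqueness of the expansion in the basis $\{B_{n,i}\}$, and a uniform scalar $\mu_n^{\pm 1}$ from equation \ref{thisthis} under Reidemeister I, so the ratios are unchanged. Your closing remark that rationality of $T$ is not needed matches the paper's own observation immediately after the theorem.
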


\begin{proof}
The fact that $\{B_{n,i}\}_{i=0}^n$ is a basis for $TL^n_4$ implies that  $\{\gamma_i(T)\}_{i=0}^{n}$ are invariants under type-$II$ and type-$III$ moves. If $T_1$ is a tangle obtained from $T$ by doing a type $I$-move then we can see from equation \ref{thisthis} that $T_{1}^n = \mu_n^{\pm 1} T^n $. In other words every element in $\{\gamma_i(T)\}_{i=0}^{n}$ is multiplied by the same factor $\mu_n^{\pm 1}$ and hence the ratios $\frac{\gamma_i(T)}{\gamma_{n}(T)}$  do not change under type-$I$ move. The result follows.
\end{proof}
\begin{remark}
The basis $\{B_{n,i}\}_{i=0}^n$ in Theorem \ref{colored tangles} does not specialize to the basis $[\infty]$ and $[0]$ in Theorem \ref{Kauffman Goldman}. In fact, the choice of basis is not essential in the proof of both theorems and one could choose any basis. In particular the basis of $TL_{4}^n$ that specializes to $\{[\infty],[0]\}$ is given in Figure \ref{222}

\begin{figure}[H]
  \centering
   \scalebox{.4}{\includegraphics{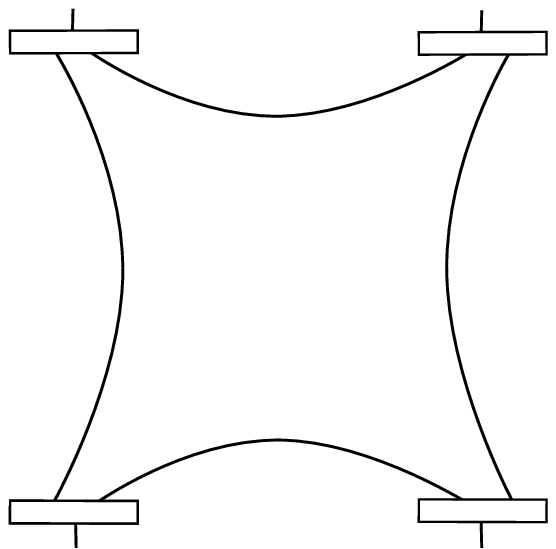}}
        \scriptsize{
        \put(-60,+64){$n$}
          \put(-5,+64){$n$}
          \put(-58,30){$k$}
          \put(-7,30){$k$}
         \put(-43,+56){$n-k$}
           \put(-43,+16){$n-k$}
           }
           \caption{For $n=1$ this basis element specializes to the basis consisting of the two elements $[\infty]$ and $[0]$.}
           \label{222}
\end{figure}          
          
\end{remark}

Note that the assumption of $T$ being  a rational tangle is not necessary in the statement of the previous theorem. Theorem \ref{colored tangles} is still valid for any $2$-tangle.

\begin{remark}
If the link $L$ in the solid torus is obtained as a closure of the $2$-tangle $T$, then the coefficients $\Gamma_i(L)$ in Theorem \ref{solidtorus} are determined uniquely and completely by the coefficients $\gamma_i(T)$ in Theorem \ref{colored tangles} and vice-versa. This follows immediately from Theorem \ref{thm}.      
\end{remark}
The previous observations carry over to other Kauffman bracket skein modules. More precisely, let $L$ be a link in the solid torus. Let $L^{n}$ be the skein element obtained from $L$ by decorating all its elements by the $n^{th}$ Jones-Wenzl projector. We have the following result which generalizes parts of Theorem \ref{main theorem}: 

\begin{theorem}
\label{solidtorus}
Let $L$ be a link in the solid torus and consider the expansion of $L^{n}$:
\begin{equation}
L^{n}= \sum_{i=0}^{k} \Gamma_{i}(L) S_i  
\end{equation}
then the ratios $\frac{\Gamma_{i}}{\Gamma_{k}}$, where $0\leq  i \leq k-1$ , are ambient isotopy invariants for the link $L$.
\end{theorem}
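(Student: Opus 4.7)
The plan is to mirror the proof of Theorem \ref{colored tangles} in the setting of the solid torus, using the fact (established in the proof of Theorem \ref{thm}) that $\{S_i\}_{i=0}^{\infty}$ is a basis for $\mathcal{S}(S^1 \times I)$. Because this is a genuine basis, the coefficients $\Gamma_i(L)$ in the expansion of $L^n$ are uniquely determined by the skein element $L^n$, so to prove invariance of the ratios it suffices to track how the $\Gamma_i$ transform under the three Reidemeister moves applied to diagrams of $L$.

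First I would observe that Reidemeister II and III moves are local operations taking place inside a small disc of the diagram and are already relations in the Kauffman bracket skein module; moreover, since $f^{(n)}$ is an idempotent that can be absorbed into the strand, these local disc moves commute with the Jones--Wenzl coloring. Thus whenever diagrams $L$ and $L'$ differ by a type II or type III move, one has the equality $L^n = (L')^n$ directly in $\mathcal{S}(S^1 \times I)$, and each coefficient $\Gamma_i$ is individually preserved.

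The key step is the Reidemeister I move. If $L'$ is obtained from $L$ by inserting a kink on one of its components, then in $(L')^n$ the kink becomes an $n$-colored kink on one strand. By equation \ref{thisthis}, such a colored kink evaluates to the scalar $\mu_n^{\pm 1} = (-1)^n A^{\mp(n^2+2n)}$ times the straightened colored strand. Hence $(L')^n = \mu_n^{\pm 1}\, L^n$ as elements of $\mathcal{S}(S^1 \times I)$, and expanding in the basis $\{S_i\}$ gives $\Gamma_i(L') = \mu_n^{\pm 1}\, \Gamma_i(L)$ for every $i$. Since this common factor cancels in the ratio, $\Gamma_i(L')/\Gamma_k(L') = \Gamma_i(L)/\Gamma_k(L)$.

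Combining the three cases, every Reidemeister move preserves the ratios $\Gamma_i/\Gamma_k$, so they are ambient isotopy invariants of $L$. The only point requiring some care is the compatibility of the Jones--Wenzl coloring with a Reidemeister I move performed locally on a single strand, but this is immediate from the idempotence of $f^{(n)}$: one may absorb the projector into the strand and apply equation \ref{thisthis} in the colored setting exactly as in the uncolored one. No essential obstacle arises beyond that.
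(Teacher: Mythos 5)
Your proposal is correct and follows essentially the same route the paper intends: the paper gives no separate proof of Theorem \ref{solidtorus}, deferring to the argument of Theorem \ref{colored tangles}, which is exactly what you reproduce (coefficients in the basis $\{S_i\}$ are regular isotopy invariants, and a type-I move rescales all of them by the common factor $\mu_n^{\pm 1}$ from equation \ref{thisthis}, so the ratios survive). The only caveat, which is an issue with the statement rather than with your argument, is that the ratios are only defined when the top coefficient $\Gamma_k$ is nonzero.
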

Theorems \ref{colored tangles} and \ref{solidtorus} can be generalized as follows.  

\begin{theorem}
\label{tangle general}
Let $T$ be a $(k,l)$ tangle and let $T^{n}$ be its $n^{th}$ colored tangle. Let $\{E_i\}_{i=0}^{s}$ be a basis for the module $\mathcal{D}^{k}_{l}(n)$, then for $0 \leq i \leq s-1 $ the ratios
\begin{equation}
CR^i_T(A)=\frac{\gamma_i(T)}{\gamma_{s}(T)} 
\end{equation} 
are ambient isotopy invariants of the tangle $T$. 
\end{theorem}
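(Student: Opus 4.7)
The plan is to mimic the argument of Theorem \ref{colored tangles} in the broader setting where the ambient disc carries $k+l$ clusters of $n$ marked points rather than the four clusters of the square. Since $\{E_i\}_{i=0}^{s}$ is a basis of $\mathcal{D}_{l}^{k}(n)$, the expansion $T^{n}=\sum_{i=0}^{s}\gamma_i(T)E_i$ uniquely determines the scalars $\gamma_i(T)\in\mathbb{Q}(A)$. Thus invariance of the ratios $CR^i_T(A)$ reduces to showing that under each Reidemeister move performed on $T$, the full collection $\{\gamma_i(T)\}_{i=0}^{s}$ is scaled by a common factor.

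First I would handle the type-II and type-III moves. Both of these are consequences of the Kauffman bracket skein relations, possibly combined with the absorption/annihilation properties (\ref{properties}) of the Jones--Wenzl projectors placed on the boundary clusters. Hence if $T'$ is obtained from $T$ by a type-II or type-III move, then $(T')^{n} = T^{n}$ as elements of $\mathcal{D}_{l}^{k}(n)$, so $\gamma_i(T')=\gamma_i(T)$ for each $i$. In particular the coefficients themselves are regular isotopy invariants.

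The key step is the type-I move. Since every component of $T$ is decorated by $f^{(n)}$, any kink introduced on a strand of $T$ lies entirely on an $n$-colored arc, and equation (\ref{thisthis}) lets the Jones--Wenzl projector absorb the kink at the cost of the scalar $\mu_n^{\pm 1}=(-1)^{\pm n}A^{\mp(n^{2}+2n)}$. Hence if $T'$ differs from $T$ by a single type-I move we obtain the skein identity
\begin{equation*}
(T')^{n} \;=\; \mu_n^{\pm 1}\, T^{n} \;=\; \mu_n^{\pm 1}\sum_{i=0}^{s}\gamma_i(T)\,E_i,
\end{equation*}
so by uniqueness of the basis expansion $\gamma_i(T')=\mu_n^{\pm 1}\gamma_i(T)$ for every $i$. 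The common factor cancels in each quotient $\gamma_i(T')/\gamma_s(T')$, proving that $CR^i_T(A)$ is unchanged.

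The main subtlety I anticipate is ensuring that the type-I absorption identity (\ref{thisthis}) is applicable wherever a kink can appear; this is where one must use that the idempotents sit on the \emph{outside} of the disc, so that any local move on $T$ takes place on a strand colored in its entirety by $f^{(n)}$. With this clarified, the three Reidemeister cases combine to give ambient isotopy invariance of each ratio $CR^i_T(A)$, and the theorem follows.
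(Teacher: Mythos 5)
Your proposal is correct and follows essentially the same route as the paper: the paper leaves Theorem \ref{tangle general} without an explicit proof precisely because it is the verbatim generalization of the argument given for Theorem \ref{colored tangles} (uniqueness of the basis expansion gives regular isotopy invariance of the coefficients, and equation \ref{thisthis} shows a type-I move rescales all coefficients by the common factor $\mu_n^{\pm 1}$, which cancels in the ratios). Your added remark about the kink lying on a strand fully colored by $f^{(n)}$ is a reasonable clarification but does not change the argument.
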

Finally, one has the following result that generalizes Theorems \ref{tangle general} and \ref{colored tangles}.

\begin{theorem}
\label{main theorem2} Let $D$ be a diagram in a surface $F$ with $m$ marked points. Let $D^{n}$ be the diagram obtained from $D$ by coloring all its components with the $n^{th}$ Jones-Wenzl projector. Let $\mathcal{E}= \{E_i\}_{i=0}^s$ be a basis for the skein module $\mathcal{S}(F_{m}(n))$ and let $\gamma_i(D)$ be the $i^{th}$ coefficient obtained from expanding $D^n$ in terms of the basis $\mathcal{E}$. Then for any $ 0\leq i \leq s-1 $ the ratios:
\begin{equation}
CR^i_D(A)=\frac{\gamma_i(D)}{\gamma_{s}(D)} 
\end{equation} 
are ambient isotopy invariants of the  diagram $D$.
\end{theorem}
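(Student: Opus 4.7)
The plan is to mirror the proof of Theorem \ref{colored tangles}, replacing the specific module $TL_4^n$ by the general colored skein module $\mathcal{S}(F_m(n))$. The strategy has two parts: first, I would show that the coefficients $\gamma_i(D)$ themselves are invariant under Reidemeister II and III moves; second, I would show that under a type-I move every coefficient is multiplied by one and the same scalar, so the ratios in $CR^i_D(A)$ cancel it.

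For regular isotopy invariance, the idea is to invoke the very construction of $\mathcal{S}(F_m(n))$ as a quotient of the free module on framed diagrams by the Kauffman bracket skein relations, which already encode invariance under Reidemeister moves of type II and III. It follows that if $D'$ is obtained from $D$ by a sequence of such moves, then $D^n$ and $(D')^n$ determine the same element of $\mathcal{S}(F_m(n))$; uniqueness of the expansion with respect to the basis $\mathcal{E}$ then forces $\gamma_i(D) = \gamma_i(D')$ for every $i$.

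For the first Reidemeister move, the key input will be equation \ref{thisthis}: a kink on a strand colored by $f^{(n)}$ is absorbed into the projector at the cost of a global scalar $\mu_n^{\pm 1}$. Since every component of $D$ carries the projector $f^{(n)}$, this rule applies componentwise, regardless of which component the kink occurs on. Hence if $D'$ differs from $D$ by a single type-I move, I would conclude that $(D')^n = \mu_n^{\pm 1} D^n$ in $\mathcal{S}(F_m(n))$, so $\gamma_i(D') = \mu_n^{\pm 1} \gamma_i(D)$ for every $i$. The common nonzero scalar then cancels from the ratio $\gamma_i(D)/\gamma_s(D)$, giving invariance under type-I moves as well.

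The hardest point to be careful about, and essentially the only real subtlety, is the existence of a basis $\mathcal{E}$ for $\mathcal{S}(F_m(n))$; in concrete cases such as the solid torus this was precisely the content of Theorem \ref{thm}. Since the statement simply assumes such a basis exists and that $\gamma_s(D)$ is nonzero, this obstacle is hypothesized away. Combining the three invariance statements then yields that $CR^i_D(A)$ is an ambient isotopy invariant of $D$, completing the argument.
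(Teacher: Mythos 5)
Your proposal is correct and follows essentially the same route as the paper: the paper states Theorem \ref{main theorem2} as a direct generalization of Theorem \ref{colored tangles}, whose proof is exactly your two-step argument (basis expansion gives regular isotopy invariance of the coefficients, and equation \ref{thisthis} shows a type-I move rescales all coefficients by the common factor $\mu_n^{\pm 1}$, which cancels in the ratios).
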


\end{document}